\documentclass{jloganal}
\usepackage{pinlabel}
\usepackage{amsmath,amssymb,mathtools}
\usepackage{graphicx}
\usepackage{listings}
\usepackage{xcolor}
\usepackage{pifont}   % for \ding
\usepackage{enumitem}
\usepackage{booktabs}
\usepackage{array}
\usepackage{subcaption}
\usepackage{tikz}
\usepackage{proof}
\usepackage{float}
\usetikzlibrary{decorations.markings}

\tikzset{
    strike through/.style={
        postaction=decorate,
        decoration={
            markings,
            mark=at position 0.5 with {
                \draw[-] (0.2em,0.2em) -- (-0.2em,-0.2em);
            }
        }
    }
}

\definecolor{codegreen}{rgb}{0,0.6,0}
\definecolor{codegray}{rgb}{0.5,0.5,0.5}
\definecolor{codepurple}{rgb}{0.58,0,0.82}
\definecolor{backcolour}{rgb}{0.95,0.95,0.92}

\lstdefinestyle{mystyle}{
    backgroundcolor=\color{backcolour},   
    commentstyle=\color{codegreen},
    keywordstyle=\color{magenta},
    numberstyle=\tiny\color{codegray},
    stringstyle=\color{codepurple},
    basicstyle=\ttfamily\small,
    breakatwhitespace=false,         
    breaklines=true,                 
    captionpos=b,                    
    keepspaces=true,                 
    numbers=left,                    
    numbersep=5pt,                  
    showspaces=false,                
    showstringspaces=false,
    showtabs=false,                  
    tabsize=2
}
\newtheorem{thm}{Theorem}[section]
\newtheorem{lem}[thm]{Lemma}
\newtheorem{prop}[thm]{Proposition}
\newtheorem{cor}[thm]{Corollary}
\theoremstyle{definition}
\newtheorem{defn}[thm]{Definition}
\newtheorem{exa}[thm]{Example}
\newtheorem{rem}[thm]{Remark}

\usepackage{hyperref}
\usepackage{cleveref}

\crefname{thm}{theorem}{theorems}
\Crefname{thm}{Theorem}{Theorems}
\crefname{lem}{lemma}{lemmas}
\Crefname{lem}{Lemma}{Lemmas}
\crefname{prop}{proposition}{propositions}
\Crefname{prop}{Proposition}{Propositions}
\crefname{cor}{corollary}{corollaries}
\Crefname{cor}{Corollary}{Corollaries}
\crefname{defn}{definition}{definitions}
\Crefname{defn}{Definition}{Definitions}
\crefname{exa}{example}{examples}
\Crefname{exa}{Example}{Examples}
\crefname{rem}{remark}{remarks}
\Crefname{rem}{Remark}{Remarks}
\crefname{axi}{axiom}{axioms}
\Crefname{axi}{Axiom}{Axioms}
\crefname{pro}{property}{properties}
\Crefname{pro}{Property}{Properties}
\crefname{conj}{conjecture}{conjectures}
\Crefname{conj}{Conjecture}{Conjectures}

\AddToHook{env/thm/begin}{\crefalias{thm}{thm}}
\AddToHook{env/lem/begin}{\crefalias{thm}{lem}}
\AddToHook{env/prop/begin}{\crefalias{thm}{prop}}
\AddToHook{env/cor/begin}{\crefalias{thm}{cor}}
\AddToHook{env/defn/begin}{\crefalias{thm}{defn}}
\AddToHook{env/exa/begin}{\crefalias{thm}{exa}}
\AddToHook{env/rem/begin}{\crefalias{thm}{rem}}
\AddToHook{env/axi/begin}{\crefalias{thm}{axi}}
\AddToHook{env/pro/begin}{\crefalias{thm}{pro}}
\AddToHook{env/conj/begin}{\crefalias{thm}{conj}}

\DeclareMathOperator{\dis}{d}
\DeclareMathOperator{\diss}{d_{\mathrm{st}}}
\DeclareMathOperator{\sign}{sign}
\DeclareMathOperator{\St}{St}

\DeclareMathOperator{\Der}{Der}
\DeclareMathOperator{\val}{val}

\newcommand{\Rzl}{\mathbb{R}^{\mathbb{Z}_{<}}}
\newcommand{\Rzlc}{\mathbb{R}_c^{\mathbb{Z}_{<}}}

\newcommand{\einf}{\mathfrak{e}}
\newcommand{\1}{\text{\ding{172}}}

\newcommand{\N}{\mathbb{N}}

\newcommand*{\underdownarrow}[1]{\ensuremath{\underset{\downarrow}{#1}}}

\title[Na\"ive Infinitesimal Analysis]{A Constructive Field of Infinitesimals:\\Chunk and Permeate Approach}

\author[A S Nugraha]{Anggha S. Nugraha}
\givenname{Anggha}
\surname{Nugraha}
\address{Institute of Computer Science\\
Czech Academy of Sciences\\
Prague, Czech Republic
}
\email{nugraha@cs.cas.cs}
\urladdr{https://stefannug.github.io/}

\keywords{Na\"ive infinitesimal analysis, Paraconsistent strategy, Grossone, Computability}
\subject{primary}{msc2020}{03H05, 26E35, 03B53}
\subject{secondary}{msc2020}{68Q17, 03F60, 54E35}
\arxivreference{arXiv:2607.08206v2}
\arxivpassword{}

\volumenumber{}
\issuenumber{}
\publicationyear{}
\papernumber{}
\startpage{}
\endpage{}
\doi{}
\MR{}
\Zbl{}
\received{}
\revised{}
\accepted{}
\published{}
\publishedonline{}
\proposed{}
\seconded{}
\corresponding{}
\editor{}
\version{}
\begin{document}

\begin{abstract}
While intuitive, na\"ive infinitesimal reasoning is classically inconsistent, and rigorous nonstandard analysis relies on non-constructive machinery. We resolve this tension by constructing an explicit, totally ordered field $\Rzl$ using only real sequences and Cauchy convolution. We model the combined real and hyperreal axioms via the Chunk and Permeate strategy, a paraconsistent technique that isolates contradictions without global collapse. Equipping $\Rzl$ with a two-tier topology, we develop a calculus where infinitesimal derivatives and integrals permeate cleanly to their classical counterparts. We further introduce a $(k,n)$-continuity hierarchy capturing infinitesimal smoothness invisible to standard or transfer-based models. Finally, $\Rzl$ yields a direct algebraic consistency proof for Sergeyev's Grossone arithmetic, and we establish strict computability bounds on field operations. By guaranteeing infinitesimal contradictions never reach the classical chunk, this work bridges paraconsistent logic, constructive mathematics, and nonstandard analysis into a transparent, computationally tractable framework for infinitesimal reasoning.
\end{abstract}

\maketitle

%%%%%%%%%%%%%%%%%%%% Start of main body of article

\vspace{-0.5cm}
\section{Introduction and Motivation}
\label{sec:intro}
\vspace{-0.5cm}
Since the inception of calculus, infinitesimals have been a source of both power and paradox. Newton and Leibniz freely used them, treating an infinitesimal \(h\) as nonzero when dividing by it, yet zero when discarding it at the end. The classic computation of the derivative of \(f(x)=ax^{2}+bx+c\) illustrates this:
\begin{align*}
f'(x) &= \frac{f(x+h)-f(x)}{h} 
= 2ax+h+b = 2ax+b.
\end{align*}
The inconsistency lies in the treatment of \(h\): it is nonzero in the denominator (otherwise the quotient is undefined) but is simply set to zero in the final step. This logical tension was formalised in the 19$^\text{th}$ century by the \(\epsilon\)-\(\delta\) limit, which eliminated infinitesimals from rigorous analysis. Nevertheless, infinitesimals remain intuitively appealing and are still used in physics and engineering \cite{susskind2014theoretical}.

In classical logic, any contradiction leads to explosion (\emph{ex contradictione quodlibet}), making inconsistent theories trivial. Paraconsistent logics, which reject explosion, allow us to reason with local inconsistencies without global collapse \cite{priest2002paraconsistent}. some works in inconsistent mathematics \cite{mortensen1995inconsistent,weber2010extensionality,mckubre2012real} has shown that one can develop real analysis, set theory, and geometry in paraconsistent settings. A particularly relevant precursor is the work on applying paraconsistent logic to real analysis and set theory \cite{mckubre2012real, weber2010extensionality}. Our paper continues this line by showing how a concrete field model can be obtained from contradictory axioms using the Chunk and Permeate strategy, a methodology already successfully employed in paraconsistent infinitesimal calculus \cite{brown2004chunk}.

Meanwhile, nonstandard analysis \cite{robinson1974non,goldblatt1998lectures} rigorously introduces infinitesimals via model theory and the transfer principle. The transfer principle, while elegant, relies on non-constructive tools (ultrafilters, choice) and is computationally opaque \cite{tao2012acheap,kanovei2013nonstandard}. Simpler, more elementary approaches have been sought: Tao’s cheap version of nonstandard analysis \cite{tao2012acheap} and the Alpha-theory of Benci and Di~Nasso \cite{benci2022measure} both avoid the full ultrapower construction, yet they still require some non-constructive choice (a free ultrafilter or a particular ideal). Our approach goes a step further. By explicitly managing the inconsistency with the Chunk and Permeate strategy, we obtain a fully constructive field model that needs no choice principles at all.

An explicit field constructed from real sequences offers full transparency, i.e. every object is a concrete sequence, every operation is performed componentwise or via convolution, and every property can be verified without appealing to set-theoretic principles beyond ordinary real analysis. This constructive approach not only yields a computationally tractable model but also reveals a rich hierarchy of continuity notions, such as \((k,n)\)-continuity, that are invisible under the classical continuum or under transfer-based hyperreals.

In this paper we propose a new number system \(\Rzl\) built explicitly from real sequences with a finite tail of infinities and a countable tail of infinitesimals. We obtain \(\Rzl\) by throwing together the languages of \(\R\) and \(^*\R\), i.e. combining their axioms, and then using the \emph{Chunk and Permeate} (C\&P) reasoning strategy \cite{brown2004chunk} to manage the resulting inconsistencies. The C\&P approach partitions the theory into consistent chunks and allows selective information to flow between them, yielding a concrete, computable model of an infinitesimal-enriched field without ultrafilters or heavy model theory.

Our main contributions are: a direct construction of \(\Rzl\) as a totally ordered field (\Cref{sec:construction}); a natural two-tier topology (\Cref{sec:topology}); a calculus including derivatives, continuity, convergence, and Riemann integration within the C\&P permeability scheme (\Cref{sec:calculus}--\Cref{sec:integration}); a fine-grained hierarchy of \((k,n)\)-continuity (\Cref{sec:continuity}); a computability analysis (\Cref{sec:computability}); and a direct consistency proof for Grossone arithmetic, which has found recent applications in infinitesimal probabilities \cite{calude2020infinitesimal} and in infinitesimally punctured physical models \cite{smarandache2026infinitesimal}. This work contributes to the dialogue between constructive and nonstandard analysis \cite{sanders2013connection,sanders2015effective} and opens avenues for applications in physics and reverse mathematics.

\vspace{-0.5cm}
\section{Background: From Transfer Principle to Chunk and Permeate}
\label{sec:prelim}

% \vspace{-0.5cm}
\subsection{The Union of \texorpdfstring{$\R$}{R} and \texorpdfstring{$^*\R$}{*R} is Inconsistent}

We work with a first-order language $\mathfrak{L}$ containing symbols for every real constant, every real-valued function, and every relation on $\R$. The real line $\R$ is the standard model of $\mathfrak{L}$. The hyperreal system $^*\R$ is another model, obtained as an ultrapower of $\R$ modulo a free ultrafilter on $\N$, which extends $\R$ with infinitesimal and infinite elements. Relations and functions are extended pointwise,
% via the ultrafilter
and the \emph{transfer principle} guarantees that a first-order sentence holds in $\R$ iff it holds in $^*\R$.

The transfer principle, however, has serious drawbacks, e.g. it is non-constructive (relying on the Axiom of Choice) and computationally opaque. Moreover, the cost-benefit analysis is poor. One must erect a substantial model-theoretic apparatus to transfer relatively few theorems. These issues have led to a search for alternative, more elementary treatments of infinitesimals.

A natural attempt to avoid heavy machinery is to simply take the union of the axioms of $\R$ and those of $^*\R$.  
Call this combined theory $T_{\mathrm{comb}}$.  
$T_{\mathrm{comb}}$ is plainly inconsistent: it asserts the existence of a positive infinitesimal (from $^*\R$) and the Archimedean property (from $\R$), which cannot hold together.  
In classical logic, an inconsistency would trivialise the theory, allowing any sentence to be proved (\emph{ex contradictione quodlibet}).  
To extract meaningful mathematics from $T_{\mathrm{comb}}$ we need a logical strategy that tolerates local contradictions without global collapse.

\vspace{-0.5cm}
\subsection{Chunk and Permeate: A Paraconsistent Inference Strategy}

The \emph{Chunk and Permeate} (C\&P) strategy \cite{brown2004chunk} is a reasoning method rooted in paraconsistent logic. A logic is \emph{paraconsistent} if the explosion rule $A,\neg A \vdash B$ is not generally valid. The C\&P approach works by splitting an inconsistent theory into several consistent \emph{chunks}, defining a \emph{permeability relation} that allows certain formulas to pass from one chunk to another, and reasoning with classical logic \emph{inside} each chunk.  
This prevents global explosion while allowing useful information to cross between chunks.

In our setting we divide the axioms of $T_{\mathrm{comb}}$ into two classical first-order theories:
\begin{itemize}[nosep]
  \item \textbf{Source chunk} $\Sigma_S$: contains the field axioms, order axioms, and the existence of a positive infinitesimal (a non-Archimedean ordered field).  
  \item \textbf{Target chunk} $\Sigma_T$: contains the field axioms, order axioms, completeness (every non-empty bounded set has a supremum), and the Archimedean property (i.e. the axioms of a Dedekind-complete ordered field, which characterise $\R$ up to isomorphism).
\end{itemize}

\begin{figure}[ht]
    \centering
    \begin{tikzpicture}[
        chunk/.style={
            draw,
            thick,
            minimum width=2cm,
            minimum height=0.2cm,
            align=center
        },
        arrow/.style={
            ->,
            >=stealth,
            thick
        }
    ]

        % Source chunk
        \node[chunk, font=\footnotesize] (S) at (0,0)
            {Source chunk $\Sigma_S$\\
            (non-Archimedean field $\Rzl$)};

        % Target chunk -- moved farther right
        \node[chunk, font=\footnotesize] (T) at (7,0)
            {Target chunk $\Sigma_T$\\
            (complete ordered field $\R$)};

        % Arrow
        \draw[arrow] (S.east) -- (T.west);

        % Labels placed explicitly in the gap
        \node[align=center, font=\footnotesize] at (3.6,0.25)
            {permeation $\rho$};

        \node[align=center, font=\footnotesize] at (3.6,-0.25)
            {only eqs. $\St(E)=a$};

        % Explanation below
        \node[align=center, font=\footnotesize] at (3.4,-1)
            {Infinitesimal statements
            that do not permeate\\
            remain confined to $\Sigma_S$};
    \end{tikzpicture}
    \caption{The Chunk \& Permeate scheme for infinitesimal calculus.}
    \label{fig:chunkpermeate}
\end{figure}

Let $\mathrm{Sent}(\Sigma_S)$ and $\mathrm{Sent}(\Sigma_T)$ be the sets of sentences of the respective chunks. We then specify a \emph{permeability relation} $\rho \subseteq \mathrm{Sent}(\Sigma_S) \times \mathrm{Sent}(\Sigma_T)$.  
Intuitively, $(\phi,\psi)\in\rho$ means that the `real shadow’ $\psi$ can be inferred from the `infinitesimal statement’ $\phi$.  
The C\&P rule sanctions the deduction
\[
\infer[\text{C\&P}]{\Sigma_T \vdash \psi}{
  \Sigma_S \vdash \phi & (\phi,\psi)\in\rho
}
\]
while \emph{no} converse permeation is allowed. Contradictions that involve only formulas that do not permeate (e.g. the existence of infinitesimals) remain confined to $\Sigma_S$ and never reach $\Sigma_T$. In particular, $T_{\mathrm{comb}}$ is \emph{non-trivial} under this strategy: although $\Sigma_S\cup\Sigma_T$ is inconsistent in classical logic, the target chunk $\Sigma_T$ never proves a contradiction because the permeability relation filters out the offending statements.

\vspace{-0.4cm}
\subsection{Modelling the chunks and verifying non-triviality}
We now show that both chunks are consistent and that the C\&P strategy indeed preserves the non-triviality of the combined system.
\begin{itemize}[nosep]
  \item \textbf{Target chunk $\Sigma_T$} is modelled by the real numbers $\R$ – a Dedekind-complete ordered field.
  \item \textbf{Source chunk $\Sigma_S$} is modelled by the field $\Rzl$ that we construct explicitly in Section~\ref{sec:construction} from real sequences with finite negative support and Cauchy convolution.  $\Rzl$ is a totally ordered field that contains a positive infinitesimal $\epsilon$ and an infinite element $\omega = 1/\epsilon$, thus satisfying $\Sigma_S$.
\end{itemize}

Because $\Sigma_S$ and $\Sigma_T$ are modelled by set-theoretic structures, each chunk is consistent (relative to the consistency of ordinary real analysis).  
The combined theory $T_{\mathrm{comb}}$ would be inconsistent if all its axioms were used freely. However, under the C\&P discipline only those consequences of $\Sigma_S$ that stand in the relation $\rho$ can be exported to $\Sigma_T$. In our concrete realisation, $\rho$ will only allow equations that involve the \emph{standard part map} $\St$ to pass from $\Sigma_S$ to $\Sigma_T$. For instance, the infinitesimal statement $\St(\Der_f(\mathbf{x},\epsilon)) = g(x)$ permeates to the classical derivative statement $f'(x)=g(x)$, but the statement ``there exists a positive infinitesimal'' does not permeate (it is not in the domain of $\St$ in the required way). Consequently, $\Sigma_T$ never acquires the ability to derive a contradiction from the infinitesimal existence axiom.

This demonstrates that $T_{\mathrm{comb}}$ is non-trivial in the sense of C\&P, and the construction provides a consistent model of the permeated theorems of classical analysis inside the real numbers, exactly mirroring the Newton-Leibniz `infinitesimal quotient $\to$ classical derivative' move.

\vspace{-0.5cm}
\section{Construction of the New Number System}
\label{sec:construction}
\subsection{The Field \texorpdfstring{$\Rzl$}{Rzl} as Formal Laurent Series}

Define $\Rzl$ as the set of all two-sided sequences of real numbers with only finitely many nonzero entries on the negative (infinite) side:
\[
\Rzl = \left\{\mathbf{x} = \langle\dots,\,x_{-2},\,x_{-1},\,\widehat{x_0},\,x_1,\,x_2,\dots\rangle \mid \exists N\ \forall n<-N\ x_n=0\right\}.
\]
The hat marks the \emph{standard part} $\St(\mathbf{x}) = x_0$. This identification is the key that will later allow us to permeate information from the source chunk to the target chunk. Equivalently, each element can be written as a formal Laurent series $\mathbf{x} = \sum_{n=-\infty}^\infty x_n \epsilon^n$,
% \[
% \mathbf{x} = \sum_{n=-\infty}^\infty x_n \epsilon^n,
% \]
where $\epsilon = \langle\widehat{0},1,0,\dots\rangle$ is a fixed positive infinitesimal, and its reciprocal $\omega = \epsilon^{-1} = \langle1,\widehat{0},0,\dots\rangle$ is infinite. Only finitely many negative powers appear.

Addition is componentwise, and multiplication is the Cauchy convolution:
\[
(\mathbf{x}\cdot\mathbf{y})_n = \sum_{k\in\Z} x_k\, y_{n-k},
\]
a finite sum. This makes $\Rzl$ a commutative ring with identity $\mathbf{1} = \langle\widehat{1},0,0,\dots\rangle$. To become a field, we need every nonzero element to be invertible, which is shown below.

\begin{lem}[Inverses]\label{lem:inverse}
Every nonzero $\mathbf{x}\in\Rzl$ has a unique multiplicative inverse.
\end{lem}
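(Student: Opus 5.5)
Given nonzero $\mathbf{x}\in\Rzl$, we will normalise by its leading term and invert a power series with nonzero constant term by a recursive (triangular) solve.

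\textbf{Normalisation.} Since $\mathbf{x}\neq 0$ and only finitely many negative-index entries are nonzero, the set $\{n : x_n\neq 0\}$ is nonempty and bounded below. It therefore has a least element $m\in\mathbb{Z}$, the valuation of $\mathbf{x}$. We write $\mathbf{x}=\epsilon^{m}\mathbf{u}$, where $u_n = x_{n+m}$. Then $u_n=0$ for $n<0$ and $u_0=x_m\neq 0$. Here $\epsilon^{m}$ means $\omega^{-m}$ when $m<0$. The shift identity $(\epsilon^m\mathbf{y})_n=y_{n-m}$ follows directly from the convolution formula, and $\epsilon^m\epsilon^{-m}=\mathbf{1}$. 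It thus suffices to invert $\mathbf{u}$ and set $\mathbf{x}^{-1}=\epsilon^{-m}\mathbf{u}^{-1}$.

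\textbf{Inverting $\mathbf{u}$.} We look for $\mathbf{v}$ with $v_n=0$ for $n<0$ and $\mathbf{u}\mathbf{v}=\mathbf{1}$. Because both sequences vanish on negative indices, $(\mathbf{u}\mathbf{v})_n=\sum_{k=0}^{n}u_k v_{n-k}$ for $n\ge 0$, and this is $0$ for $n<0$. The equations $(\mathbf{u}\mathbf{v})_0=1$ and $(\mathbf{u}\mathbf{v})_n=0$ for $n\ge1$ form a lower-triangular system. We solve it recursively:
\[
v_0 = u_0^{-1}, \qquad v_n = -u_0^{-1}\sum_{k=1}^{n} u_k v_{n-k}\quad (n\ge 1).
\]
Strong induction on $n$ shows that this defines each $v_n$ uniquely and that $\mathbf{u}\mathbf{v}=\mathbf{1}$. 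All sums involved are finite, and $\mathbf{v}\in\Rzl$ trivially. Associativity and commutativity of convolution, established above where $\Rzl$ was shown to be a commutative ring, then give $\mathbf{x}\cdot(\epsilon^{-m}\mathbf{v})=\epsilon^{m}\epsilon^{-m}\mathbf{u}\mathbf{v}=\mathbf{1}$.

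\textbf{Uniqueness.} This is the standard ring argument and does not need the recursion. If $\mathbf{y}$ and $\mathbf{y}'$ are both inverses of $\mathbf{x}$, then $\mathbf{y}=\mathbf{y}(\mathbf{x}\mathbf{y}')=(\mathbf{y}\mathbf{x})\mathbf{y}'=\mathbf{y}'$, using associativity in the commutative ring $\Rzl$.

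The only delicate point is the normalisation step. One must check that $m$ exists, which uses the finiteness of negative support together with $\mathbf{x}\neq0$. One must also check that shifting by $\epsilon^{m}$ is exactly multiplication in $\Rzl$. Without the reduction to valuation $0$, the recursion cannot start, because the lowest coefficient equation must be solved first. Once that is in place, the rest is the routine triangular recursion.
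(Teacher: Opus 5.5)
Your proposal is correct and follows the paper's proof essentially step for step: you factor out $\epsilon^{m}$ with $m=\val(\mathbf{x})$, solve $\mathbf{u}\mathbf{v}=\mathbf{1}$ by the triangular recursion $v_n=-u_0^{-1}\sum_{k=1}^{n}u_kv_{n-k}$, and set $\mathbf{x}^{-1}=\epsilon^{-m}\mathbf{v}$. Your separate ring-theoretic uniqueness argument is a small improvement, because the paper's appeal to the recursion only establishes uniqueness among sequences supported on nonnegative indices.
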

\vspace{-0.5cm}
\begin{proof}
Let $m = \min\{n\in\Z : x_n \neq 0\}$ be the valuation of $\mathbf{x}$. Write $\mathbf{x} = \epsilon^{m} u$ with $u_0 \neq 0$. We construct $v = \sum_{n\ge 0} v_n \epsilon^n$ satisfying $u v = 1$. The convolution gives $u_0 v_0 = 1$, and for $n\ge 1$, $\sum_{j=0}^{n} u_j v_{n-j} = 0$ yields the recurrence $v_n = -\frac{1}{u_0}\sum_{j=1}^{n} u_j v_{n-j}$. This determines all $v_n$ uniquely. Then $\mathbf{x}^{-1} = \epsilon^{-m} v$ is well-defined and has a finite negative part. Hence $\Rzl$ is a field.
\end{proof}

The \emph{valuation} $\val(\mathbf{x}) = \min\{n : x_n \neq 0\}$ (with $\val(0)=\infty$) is discrete and provides a non-Archimedean absolute value. The order is lexicographic from the most negative index: $\mathbf{x} < \mathbf{y}$ iff $\val(\mathbf{y}-\mathbf{x}) = n$ and $y_n - x_n > 0$. This makes $\Rzl$ a totally ordered field. The embedding $\R\hookrightarrow\Rzl$ given by $r \mapsto \langle\widehat{r},0,0,\dots\rangle$ preserves the order. Under this order, $\epsilon$ is a positive infinitesimal ($0<\epsilon<1/n$ for all $n\in\N$), and $\omega$ is infinite.

\begin{defn}
An element $\mathbf{x}\in\Rzl$ is \emph{infinitesimal} if $|\mathbf{x}| < 1/n$ for every $n\in\N$, \emph{finite} if $|\mathbf{x}| < r$ for some $r\in\R$, \emph{infinite} if $|\mathbf{x}| > r$ for all $r\in\R$, and \emph{appreciable} if it is finite but not infinitesimal.
% \begin{itemize}
% \item \emph{infinitesimal} if $|\mathbf{x}| < 1/n$ for every $n\in\N$;
% \item \emph{finite} if $|\mathbf{x}| < r$ for some $r\in\R$;
% \item \emph{infinite} if $|\mathbf{x}| > r$ for all $r\in\R$;
% \item \emph{appreciable} if it is finite but not infinitesimal.
% \end{itemize}
\end{defn}

\begin{lem}[Standard part homomorphism]
\label{lem:Standardparthomomorphism}
The map $\St : \mathcal{F} \to \R$, where $\mathcal{F} = \{\mathbf{x}\in\Rzl : \mathbf{x} \text{ is finite}\}$, is a surjective ring homomorphism whose kernel is exactly the set of infinitesimals.
\end{lem}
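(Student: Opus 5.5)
The plan is to first characterise finiteness and infinitesimality in terms of the valuation, and then read off all three claims from the convolution formula. Using the lexicographic order, I will show that $\mathbf{x}$ is finite iff $\val(\mathbf{x})\ge 0$, i.e.\ $x_n=0$ for all $n<0$.

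For one direction, suppose $m=\val(\mathbf{x})<0$. Then for every real $r$ the difference $|\mathbf{x}|-r$ has valuation $m$ and leading coefficient $|x_m|>0$, so $|\mathbf{x}|>r$ and $\mathbf{x}$ is infinite. Conversely, if $\val(\mathbf{x})\ge 0$, take $r=|x_0|+1$. Then $r-|\mathbf{x}|$ has valuation $0$ with positive leading coefficient (either $r-|x_0|=1$, or $r$ itself when $x_0=0$), so $|\mathbf{x}|<r$.

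The same comparison gives the description of infinitesimals: $\mathbf{x}$ is infinitesimal iff $\val(\mathbf{x})\ge 1$, i.e.\ iff $\mathbf{x}$ is finite with $x_0=0$. If $x_0\neq 0$ and $\mathbf{x}$ is finite, choose $n$ with $1/n<|x_0|$. Then $|\mathbf{x}|-1/n$ has valuation $0$ and positive leading term, so $\mathbf{x}$ is not infinitesimal. If instead $\val(\mathbf{x})\ge1$, then $1/n-|\mathbf{x}|$ has valuation $0$ with leading coefficient $1/n>0$, so $\mathbf{x}$ is infinitesimal.

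Next comes the homomorphism property. By the characterisation, $\mathcal{F}$ is exactly the set of sequences supported on $n\ge0$. It is a subring: it is closed under addition, and under the convolution the support of a product lies in $\{n\ge 0\}$ as well. On $\mathcal{F}$ the coefficient of index $0$ of a product is
\[
(\mathbf{x}\cdot\mathbf{y})_0=\sum_{k} x_k y_{-k}=x_0y_0,
\]
since all terms with $k\neq0$ involve a negative index on one factor, and those coefficients vanish. Additivity of $\St$ is immediate from componentwise addition, and $\St(\mathbf{1})=1$. Surjectivity holds because $\St$ sends the embedded real $\langle\widehat{r},0,\dots\rangle$ to $r$. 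Finally, the kernel is $\{\mathbf{x}\in\mathcal{F}: x_0=0\}$, which is exactly the set of infinitesimals by the characterisation above.

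The only real care needed is the first step, translating the order-theoretic definitions into valuation conditions. This requires tracking the sign of leading coefficients for $|\mathbf{x}|$, which I handle via $|\mathbf{x}|=\pm\mathbf{x}$ according to the sign of $x_{\val(\mathbf{x})}$. Once that is in place, everything else is a direct coefficient computation.
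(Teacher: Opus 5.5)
Your proof is correct and follows the same route as the paper: on $\mathcal{F}$ the index-$0$ coefficient is additive and multiplicative, surjectivity comes from the embedding $r\mapsto\langle\widehat{r},0,\dots\rangle$, and the kernel is $\{\mathbf{x}\in\mathcal{F}: x_0=0\}$. Your preliminary characterisation ($\mathbf{x}$ finite iff $\val(\mathbf{x})\ge 0$, infinitesimal iff $\val(\mathbf{x})\ge 1$) is exactly what the paper uses without proof when it writes $(\mathbf{x}\cdot\mathbf{y})_0=x_0y_0$ and calls $x_0=0$ ``by definition'' infinitesimal, so your version fills in those steps and also checks that $\mathcal{F}$ is a subring.
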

\vspace{-0.5cm}
\begin{proof}
For any finite $\mathbf{x},\mathbf{y}$, the components satisfy $(\mathbf{x}+\mathbf{y})_0 = x_0+y_0$ and $(\mathbf{x}\cdot\mathbf{y})_0 = x_0y_0$. Therefore $\St(\mathbf{x}+\mathbf{y}) = \St(\mathbf{x})+\St(\mathbf{y})$ and $\St(\mathbf{x}\cdot\mathbf{y}) = \St(\mathbf{x})\St(\mathbf{y})$. Surjectivity is clear, and $\St(\mathbf{x})=0$ iff $x_0=0$, which by definition means $\mathbf{x}$ is infinitesimal. Hence the kernel is the ideal of infinitesimals.
\end{proof}

The algebraic fact above is crucial for the permeation of derivatives and integrals later on. For $m\in\N$, set $\Delta^m = \{a\epsilon^m : a\in\R\}$ and $\Delta^{\underdownarrow{m}} = \bigcup_{n\ge m}\Delta^n$. These subsets stratify the infinitesimals into levels of magnitude, a structure that will be essential for the $(k,n)$-continuity hierarchy.

\vspace{-0.5cm}
\subsection{Grossone Arithmetic in \texorpdfstring{$\Rzl$}{Rzl}}
\vspace{-0.3cm}
Sergeyev’s Grossone $\1$ is an infinite number designed to behave like an ordinary natural number, so that standard arithmetic rules apply seamlessly to infinite and infinitesimal quantities \cite{sergeyev2013arithmetic}. 
In our field $\Rzl$ we simply set $\1 = \omega = \epsilon^{-1}$. 
Then all axioms of Grossone (e.g.\ $\1$ larger than any finite natural number, $\1-1<\1$, $\1/\1=1$) are satisfied as immediate field-theoretic facts. 
For example, $\1-1 = \omega-1$ has valuation $-1$ and its most significant nonzero coefficient (at index $0$) is $-1$, while $\omega$ has first coefficient $1$ at index $-1$. Since $0 > -1$ in the lexicographic order, we obtain $\omega-1 < \omega$, i.e.\ $\1-1 < \1$. Moreover, complicated expressions like $(\1 + 1/\1)^{-1}$ can be expanded explicitly as series, providing concrete computational content.

This construction yields a \emph{direct, finitary consistency proof} for the Grossone axioms, entirely within the usual real numbers plus sequences and convolution. 
Unlike the meta-mathematical proof of Lolli \cite{lolli2015metamathematical}, which requires coding the syntax of Grossone’s theory and a model-theoretic argument, our model is purely algebraic and fully explicit, i.e. every element is a concrete sequence, every operation is computable, and the axioms are verified by elementary calculations. Thus the consistency of Grossone arithmetic is reduced to the consistency of ordinary real analysis, with no hidden set-theoretic assumptions or non-constructive steps.

\vspace{-0.5cm}
\section{Topology on \texorpdfstring{$\Rzl$}{Rzl}}
\label{sec:topology}

\subsection{Metrics, balls, and topological properties}

The valuation $\val(\mathbf{x}) = \min\{n : x_n \neq 0\}$ (with
$\val(0)=\infty$) induces an ultrametric
\[
\dis(\mathbf{x},\mathbf{y}) = 2^{-\val(\mathbf{x}-\mathbf{y})}
\qquad (\text{with } 2^{-\infty}=0),
\]
generating the \emph{valuation topology} $\tau_v$.  This topology
distinguishes infinitesimally close points and is the natural setting
for the $(k,n)$-continuity hierarchy and the Newton quotient.

Let $\mathcal{F} = \{\mathbf{x}\in\Rzl : \mathbf{x}\text{ is finite}\}$
be the ring of finite elements.  The standard part map
$\St : \mathcal{F} \to \R$ is a surjective ring homomorphism (\Cref{lem:Standardparthomomorphism}).  Define a pseudo-metric on $\mathcal{F}$ by
\[
\diss(\mathbf{x},\mathbf{y}) = |\St(\mathbf{x}) - \St(\mathbf{y})|,
\]
which generates the \emph{standard topology} $\tau_{\St}$ on
$\mathcal{F}$.  This topology identifies points with the same standard part, mirroring classical real analysis. Infinite elements are excluded because $\St$ is not defined there, all constructions of
calculus (e.g. derivatives, integrals, continuity relative to $\tau_{\St}$) take place inside $\mathcal{F}$ anyway.)

For a positive real number $r$, an St-ball of radius $r$ around
$\mathbf{a}\in\mathcal{F}$ is
\[
B_{\St}(\mathbf{a},r) = \{\mathbf{x}\in\mathcal{F} :
\diss(\mathbf{x},\mathbf{a}) < r\}.
\]
For the valuation topology, a v-ball of radius $r = 2^{-k}$ (or any
positive element) around $\mathbf{a}\in\Rzl$ is
\[
B_v(\mathbf{a},r) = \{\mathbf{x}\in\Rzl : \dis(\mathbf{x},\mathbf{a}) < r\}.
\]

% \begin{table}[h]
% \centering
% \caption{Metrics and balls used in the paper}
% \label{table:balls}
% \begin{tabular}{lll}
% \toprule
% Set & Metric & Typical ball \\
% \midrule
% $\R$ & $\rho(x,y)=|x-y|$ & $(x-r,\,x+r)$ \\
% $\Rzl$ & $\dis(\mathbf{x},\mathbf{y}) = 2^{-\val(\mathbf{x}-\mathbf{y})}$
%       & $B_v(\mathbf{x},r)$, clopen \\
% $\mathcal{F}$ & $\diss(\mathbf{x},\mathbf{y}) = |\St(\mathbf{x})-\St(\mathbf{y})|$
%               & $B_{\St}(\mathbf{x},r) = \St^{-1}\big((\St(\mathbf{x})-r,\,
%                 \St(\mathbf{x})+r)\big)$ \\
% \bottomrule
% \end{tabular}
% \end{table}

\begin{thm}[Topological properties]
\label{thm:topology}
\textcolor{white}{}
\begin{enumerate}[label=(\roman*)]
\item $(\mathcal{F},\tau_{\St})$ is not Hausdorff but is preregular.
\item $(\Rzl,\tau_v)$ is Hausdorff, first-countable, zero-dimensional,
      spherically complete, not locally compact, and not
      second-countable.
\end{enumerate}
\end{thm}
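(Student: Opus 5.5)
The plan is to treat the two topologies separately, reducing everything about $\tau_{\St}$ to the real line through $\St$ and everything about $\tau_v$ to the ultrametric $\dis$.

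For (i), the key observation is that $\tau_{\St}$ is exactly the initial topology of $\St:\mathcal{F}\to\R$. Its open sets are precisely the preimages $\St^{-1}(U)$ with $U\subseteq\R$ open, because each St-ball is $B_{\St}(\mathbf{a},r)=\St^{-1}\bigl((\St(\mathbf{a})-r,\St(\mathbf{a})+r)\bigr)$.

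\emph{Failure of Hausdorff.} Take $\mathbf{0}$ and $\epsilon$. Both lie in $\mathcal{F}$, they are distinct, and $\St(\mathbf{0})=\St(\epsilon)=0$. Every open set containing one of them has the form $\St^{-1}(U)$ with $0\in U$, so it contains the other as well; hence they cannot be separated.

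\emph{Preregularity.} We must show that topologically distinguishable points can be separated by disjoint open sets. Two points $\mathbf{x},\mathbf{y}$ are distinguishable iff $\St(\mathbf{x})\neq\St(\mathbf{y})$, since points with equal standard part have identical neighbourhood systems. In that case we choose disjoint open intervals around $\St(\mathbf{x})$ and $\St(\mathbf{y})$ in $\R$, and their preimages under $\St$ are disjoint open sets separating $\mathbf{x}$ and $\mathbf{y}$. Put differently, the Kolmogorov quotient of $(\mathcal{F},\tau_{\St})$ is $\R$ itself, which is Hausdorff.

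For (ii), we first verify that $\dis$ is a genuine ultrametric. Positivity holds since $\val(\mathbf{z})=\infty$ iff $\mathbf{z}=0$. The strong triangle inequality follows from $\val(\mathbf{x}+\mathbf{y})\ge\min(\val\mathbf{x},\val\mathbf{y})$, which is immediate componentwise. With this in hand, most properties are routine:

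\begin{itemize}
\item \emph{Hausdorff} and \emph{first-countable}: every metric space has both, using the countable base $B_v(\mathbf{a},2^{-k})$, $k\in\Z$.
\item \emph{Zero-dimensional}: in an ultrametric space every open ball is also closed, so the balls form a clopen base.
\end{itemize}

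The three remaining properties each need an argument.

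\emph{Not second-countable.} The sets $B_v(r,1)=\{\mathbf{x}:\val(\mathbf{x}-r)\ge1\}$, for $r\in\R$, are uncountably many pairwise disjoint nonempty open sets. This rules out even separability, and a metric space is second-countable iff it is separable.

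\emph{Not locally compact.} Within any ball $B_v(\mathbf{a},2^{-k})$, the points $\mathbf{a}+n\epsilon^{k+1}$ for $n\in\N$ satisfy $\dis=2^{-(k+1)}$ pairwise. They therefore form an infinite closed discrete set, so no ball has compact closure. Since the balls form a neighbourhood base, no point has a compact neighbourhood.

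\emph{Spherical completeness.} This is the step I expect to be the main obstacle. Let $B_1\supseteq B_2\supseteq\cdots$ be a decreasing chain of closed balls $B_j=\{\mathbf{x}:\val(\mathbf{x}-\mathbf{a}_j)\ge k_j\}$, with centres $\mathbf{a}_j$ and $k_j$ nondecreasing. Because the value group is $\Z$, a chain indexed by an arbitrary ordinal can be reduced to a countable cofinal subchain, so sequences suffice. There are two cases.
\begin{itemize}
\item If $k_j$ is eventually constant, the balls eventually coincide, since in an ultrametric space any point of a ball is a centre. The intersection is then that ball, which is nonempty.
\item If $k_j\to\infty$, nesting gives $\val(\mathbf{a}_{j+1}-\mathbf{a}_j)\ge k_j$. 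Hence the coefficients $(\mathbf{a}_j)_n$ stabilise for each fixed $n$ once $k_j>n$. All $\mathbf{a}_j$ agree below index $k_1$, so their negative supports are uniformly bounded. We define $\mathbf{b}$ coordinatewise by the stabilised values; it has finite negative support, so $\mathbf{b}\in\Rzl$. Then $\val(\mathbf{b}-\mathbf{a}_j)\ge k_j$ for every $j$, so $\mathbf{b}\in\bigcap_j B_j$.
\end{itemize}

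The care needed here is to check that $\mathbf{b}$ really has bounded negative support and to justify the reduction to countable chains. A precise formulation is that we are verifying completeness of $\Rzl$ as a metric space, and that completeness together with a discrete value group implies spherical completeness.
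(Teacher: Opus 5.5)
Your proposal is correct, and for (i) and for the Hausdorff, first-countable, zero-dimensional and non-second-countable parts of (ii) it follows the paper's argument. The paper likewise separates points with distinct standard parts by St-balls of half the distance. It also rules out second countability via the uncountably many pairwise disjoint infinitesimal neighbourhoods of the reals, which are your balls $B_v(r,1)$, $r\in\mathbb{R}$.

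You depart from the paper in two places.

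\emph{Spherical completeness.} The paper only cites the general fact that formal Laurent series over $\mathbb{R}$ are spherically complete. You prove it directly:
\begin{itemize}
\item In a chain, balls with the same integer radius coincide, so any chain reduces to a sequence with nondecreasing radii. This monotonicity, which you assert, is automatic, since the set $\{\mathbf{x}:\val(\mathbf{x}-\mathbf{a})\ge k\}$ determines $k$.
\item When the radii tend to infinity, the coefficients of the centres stabilise to an element of bounded negative support that lies in every ball.
\end{itemize}
This costs a paragraph but is self-contained. It also shows that only completeness plus discreteness of the valuation is used, not any property of the coefficient field.

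\emph{Non-local-compactness.} The paper's proof gives no argument here; it proves non-connectedness instead, which is not part of the statement. Your infinite closed discrete set $\{\mathbf{a}+n\epsilon^{k+1}: n\in\mathbb{N}\}$ inside the clopen ball $B_v(\mathbf{a},2^{-k})$ therefore supplies a step the paper omits. It is exactly here that the infiniteness of the residue field $\mathbb{R}$ enters.
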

\vspace{-0.5cm}
\begin{proof}
(i) If $\St(\mathbf{x})=\St(\mathbf{y})$, every St-open set containing
one contains the other, so they cannot be separated.  If
$\St(\mathbf{x})\neq\St(\mathbf{y})$, the St-balls of radius
$\frac12|\St(\mathbf{x})-\St(\mathbf{y})|$ are disjoint, giving
preregularity.

(ii) Hausdorffness follows from the ultrametric.  The countable family
$\{B_v(\mathbf{x},2^{-k})\}$ is a local base, so the space is
first-countable.  Ultrametric balls are clopen, hence
zero-dimensional.  Spherical completeness follows from the field of formal Laurent series over the spherically complete field $\R$ (see e.g.~\cite{kanovei2013nonstandard}).  Non-second-countability follows from the uncountably many disjoint v-balls of radius $\epsilon$.
Non-connectedness is witnessed by the partition into
$S_1 = \{\mathbf{x}\mid \mathbf{x}\le0\text{ or }
\mathbf{x}\in\Delta^{\underdownarrow{1}}\}$ and
$S_2 = \{\mathbf{x}\mid \mathbf{x}>0,\ \mathbf{x}\notin\Delta^{\underdownarrow{1}}\}$,
both v-open.
\end{proof}

The interplay of the two topologies is essential.  The $\tau_v$
topology provides a fine-grained analysis of infinitesimal behaviour,
while $\tau_{\St}$ recovers classical real analysis on the finite
numbers by collapsing infinitesimals.  This dual perspective will be
exploited in the calculus sections that follow.

\vspace{-0.5cm}
\section{Permeation of Calculus in the C\&P Framework}
\label{sec:permeation-calculus}
\vspace{-0.5cm}
The C\&P strategy described in Section~\ref{sec:prelim} partitions the inconsistent combined theory $T_{\mathrm{comb}}$ into a source chunk $\Sigma_S$ (the field $\Rzl$) and a target chunk $\Sigma_T$ (the real numbers $\R$).  
A permeability relation $\rho \subseteq \mathrm{Sent}(\Sigma_S) \times \mathrm{Sent}(\Sigma_T)$ controls exactly which statements can travel from the infinitesimal world to the classical world.  
In this section we make $\rho$ explicit for calculus, explain why it covers the usual derivative and integral theorems, and highlight the logical limits that distinguish this approach from full transfer.

\vspace{-0.5cm}
\subsection{Microstable functions}
\vspace{-0.4cm}
For the standard part map to mediate between the two chunks, the functions under consideration must be insensitive to infinitesimal perturbations on the macroscopic level.

\begin{defn}[Microstability]\label{def:microstability}
A function $f:\mathcal{F}\to\Rzl$ is \emph{microstable} if $f(\mathbf{x})$ is finite for every finite $\mathbf{x}$, and for every finite $\mathbf{x}$ and every infinitesimal $\mathbf{\eta}$ (i.e.\ $\val(\mathbf{\eta})>0$),
\[
\St(f(\mathbf{x}+\mathbf{\eta})) = \St(f(\mathbf{x})).
\]
\end{defn}

The exponential, sine, and cosine can be extended to the finite elements of $\Rzl$ by Taylor expansion around their standard parts. The class of microstable functions is closed under the usual algebraic operations, guaranteeing that once we have a stock of elementary microstable functions, permeation extends automatically to all functions built from them.

\begin{lem}[Closure of microstability]
\label{lem:closure-microstable}
Suppose $f,g:\mathcal{F}\to\Rzl$ are microstable (hence $f(\mathbf{x})$, $g(\mathbf{x})$
are finite for all finite $\mathbf{x}$). Then the functions
$f+g$, $f\cdot g$, and $f\circ g$ (where defined) are also microstable.
\end{lem}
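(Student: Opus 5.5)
Both conditions of \Cref{def:microstability} (finiteness on $\mathcal{F}$ and standard-part invariance under infinitesimal shifts) have to be checked for each of $f+g$, $f\cdot g$ and $f\circ g$. The main tool is \Cref{lem:Standardparthomomorphism}. Since $\St$ is a ring homomorphism on $\mathcal{F}$ whose kernel is the ideal of infinitesimals, sums and products of finite elements are finite, and $\St$ commutes with these operations. Finiteness of sums and products can also be seen directly: if $|\mathbf{a}|<r$ and $|\mathbf{b}|<s$ with $r,s\in\R$, then $|\mathbf{a}+\mathbf{b}|<r+s$ and $|\mathbf{a}\mathbf{b}|<rs$, because $\Rzl$ is a totally ordered field.

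For the sum and product, fix a finite $\mathbf{x}$ and an infinitesimal $\eta$. By hypothesis $f(\mathbf{x}),g(\mathbf{x}),f(\mathbf{x}+\eta),g(\mathbf{x}+\eta)$ all lie in $\mathcal{F}$; note $\mathbf{x}+\eta$ is finite, so $f$ and $g$ are defined there. Then
\[
\St\bigl((f+g)(\mathbf{x}+\eta)\bigr)=\St(f(\mathbf{x}+\eta))+\St(g(\mathbf{x}+\eta))=\St(f(\mathbf{x}))+\St(g(\mathbf{x}))=\St\bigl((f+g)(\mathbf{x})\bigr),
\]
and the identical chain with products handles $f\cdot g$.

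For the composition, the issue is that $\eta':=g(\mathbf{x}+\eta)-g(\mathbf{x})$ is in general \emph{not} zero, only infinitesimal. By additivity of $\St$ and microstability of $g$, $\St(\eta')=0$, so $\eta'$ lies in the kernel of $\St$, hence is infinitesimal. The finite element $\mathbf{y}:=g(\mathbf{x})$ is a legitimate argument of $f$ because $g$ maps $\mathcal{F}$ into $\mathcal{F}$; this is exactly why the finiteness clause is built into the definition, and it is also what "where defined" refers to. Applying microstability of $f$ at the pair $(\mathbf{y},\eta')$ gives
\[
\St\bigl(f(g(\mathbf{x}+\eta))\bigr)=\St(f(\mathbf{y}+\eta'))=\St(f(\mathbf{y}))=\St\bigl(f(g(\mathbf{x}))\bigr).
\]
Finiteness of $f\circ g$ on $\mathcal{F}$ follows by applying the finiteness clause twice.

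The only step needing care is this composition step. It requires recognising that microstability of $g$ yields an infinitesimal perturbation $\eta'$ of the inner value, rather than equality of the values, and that the definition of microstability quantifies over \emph{all} infinitesimals. With that, the case is a direct invocation of \Cref{lem:Standardparthomomorphism}; the case $\eta'=0$ is trivial in any event.
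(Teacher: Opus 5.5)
Your proof is correct and follows the paper's argument essentially step for step: the sum and product cases use the homomorphism property of $\St$ from \Cref{lem:Standardparthomomorphism}, and the composition case writes $g(\mathbf{x}+\eta)=g(\mathbf{x})+\eta'$ with $\eta'$ infinitesimal and then applies microstability of $f$ at $g(\mathbf{x})$. You are slightly more explicit than the paper about finiteness (of $\mathbf{x}+\eta$, of $\eta'$ before applying $\St$, and of $f\circ g$), but the route is the same.
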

\vspace{-0.5cm}
\begin{proof}
For $f+g$ and $f\cdot g$ we use the homomorphism properties of $\St$ on
finite elements. Since $f(\mathbf{x})$ and $g(\mathbf{x})$ are finite,
$f(\mathbf{x})+g(\mathbf{x})$ and $f(\mathbf{x})\,g(\mathbf{x})$ are
finite. Moreover, for any infinitesimal $\mathbf{\eta}$,
\begin{align*}
\St((f+g)(\mathbf{x}+\mathbf{\eta}))
 &= \St(f(\mathbf{x}+\mathbf{\eta})) + \St(g(\mathbf{x}+\mathbf{\eta})) \\
 &= \St(f(\mathbf{x})) + \St(g(\mathbf{x})) = \St((f+g)(\mathbf{x})), \\
\St((f\cdot g)(\mathbf{x}+\mathbf{\eta}))
 &= \St(f(\mathbf{x}+\mathbf{\eta}))\,\St(g(\mathbf{x}+\mathbf{\eta})) \\
 &= \St(f(\mathbf{x}))\,\St(g(\mathbf{x})) = \St((f\cdot g)(\mathbf{x})).
\end{align*}
For composition, assume that $g$ maps finite numbers to finite numbers
(which is guaranteed by microstability). Then $f\circ g$ is defined on
$\mathcal{F}$ and yields finite values. Write
$g(\mathbf{x}+\mathbf{\eta}) = g(\mathbf{x}) + \mathbf{\delta}$ with
$\mathbf{\delta}$ infinitesimal (since $g$ is microstable and
$\mathbf{x}$ finite, $\St(g(\mathbf{x}+\mathbf{\eta})) =
\St(g(\mathbf{x}))$, so the difference is infinitesimal). Now
\begin{align*}
\St(f(g(\mathbf{x}+\mathbf{\eta})))
 &= \St(f(g(\mathbf{x}) + \mathbf{\delta})) \\
 &= \St(f(g(\mathbf{x}))) \quad \text{(by microstability of $f$)} \\
 &= \St((f\circ g)(\mathbf{x})).
\end{align*}
Thus $f\circ g$ is microstable.
\end{proof}

\vspace{-0.5cm}
\subsection{The general form of the permeability relation for calculus}
% \vspace{-0.5cm}
Every classical limit or integral arises as the standard part of a microstable infinitesimal expression.  
Accordingly, $\rho$ contains all pairs
\[
\bigl(\,\St(E) = a,\; E_{\R} = a\,\bigr),
\]
where $E$ is a term of $\Sigma_S$ built from microstable functions and $\epsilon$ (e.g. a Newton quotient or a Riemann sum), $a$ is a real number, and $E_{\R}$ is the corresponding classical real expression.  
Statements that assert the bare existence of infinitesimals, or that involve notions like compactness that are not visible in the valuation topology, never enter $\rho$.  
Hence contradictions that involve those notions remain confined to $\Sigma_S$, while the target chunk $\Sigma_T$ stays a faithful model of $\R$.

\vspace{-0.5cm}
\subsection{Permeation of derivatives and integrals}
\label{sec:permeation-der-int}
\vspace{-0.2cm}
As we will demonstrate in the following sections, both the classical derivative and the Riemann integral arise as standard parts of microstable infinitesimal expressions. By populating the permeability relation $\rho$ with pairs of the form $(\St(E) = a,\, E_{\R} = a)$, both pillars of elementary calculus permeate cleanly from $\Sigma_S$ to the target chunk $\Sigma_T$.

\vspace{-0.5cm}
\subsection{What does \emph{not} permeate?}
\vspace{-0.2cm}
The extreme value property (EVP) and the intermediate value property (IVP) illustrate the controlled nature of the C\&P strategy. In $\Sigma_T$ (the real numbers) every continuous function on $[a,b]$ satisfies EVP and IVP. In $\Sigma_S$ (the field $\Rzl$) these theorems fail for the valuation topology (see Section~\ref{sec:continuity}). Therefore $\Sigma_S$ can prove the \emph{negation} of EVP (or IVP) for a specific function, but that negation does not permeate to $\Sigma_T$, because it is not of the form $\St(E)=a$ with $E$ microstable. The C\&P rule is one-way: $\Sigma_S \vdash \phi$ and $(\phi,\psi)\in\rho$ yields $\Sigma_T \vdash \psi$, but no converse permeation is allowed. Thus the inconsistency between ``EVP holds'' and ``EVP fails'' is safely contained.

\vspace{-0.5cm}
\subsection{Comparison with the transfer principle}
\vspace{-0.2cm}
Traditional nonstandard analysis employs the \emph{transfer principle} to move \emph{all} first-order properties between $\R$ and $^*\R$, requiring a free ultrafilter and the Axiom of Choice. In contrast, the C\&P approach deliberately permits only a restricted family of equations (those involving standard parts of microstable expressions) to cross the chunk boundary. The gain is a fully constructive and explicit field model while the cost is that not every theorem of classical analysis is automatically available in the infinitesimal realm. One must verify microstability by hand, which, as we have shown, is straightforward for the elementary functions of calculus. Moreover, the C\&P framework is modular, i.e. if a new class of microstable expressions is identified, the permeability relation can be extended accordingly, mirroring the way mathematical analysis is built layer by layer.

\vspace{-0.5cm}
\section{Derivatives in \texorpdfstring{$\Rzl$}{Rzl}}
\label{sec:calculus}

\vspace{-0.3cm}
The valuation topology $\tau_v$ captures the infinitesimal behaviour needed to define Newton quotients, while the standard topology $\tau_{\St}$ (via the standard part map) connects the resulting expressions to classical real derivatives.  
Building on the permeability scheme of \Cref{sec:permeation-calculus}, we now develop the derivative calculus in detail.

% \subsection{Chunk \& Permeate Scheme for Derivatives}
% \label{sec:derivative-scheme}

% Recall from Definition~\ref{def:microstability} that a function $f:\Rzl\to\Rzl$ is \emph{microstable} if $\St(f(\mathbf{x}+\mathbf{\eta})) = \St(f(\mathbf{x}))$ for every infinitesimal $\mathbf{\eta}$.  
% As described in Section~\ref{sec:permeation-calculus}, the permeability relation $\rho$ allows equations of the form $\St(E)=a$ to pass from $\Sigma_S$ to $\Sigma_T$ whenever $E$ is built from microstable functions and the infinitesimal $\epsilon$.  
% For derivatives we instantiate this scheme with the Newton quotient.

% Within $\Sigma_S$ we define the difference quotient operator
% \[
% \Der_f(\mathbf{x},\mathbf{\epsilon}) = \frac{f(\mathbf{x}+\mathbf{\epsilon}) - f(\mathbf{x})}{\mathbf{\epsilon}},
% \qquad \mathbf{\epsilon}=\epsilon.
% \]

\vspace{-0.5cm}
\subsection{Chunk \& Permeate Scheme for Derivatives}
\label{sec:derivative-scheme}

To instantiate the permeation scheme for calculus, we define the difference quotient operator within $\Sigma_S$:
\[
\Der_f(\mathbf{x},\epsilon) = \frac{f(\mathbf{x}+\epsilon) - f(\mathbf{x})}{\epsilon}.
\]
If $f$ is the canonical extension of a real-analytic function, then \Cref{lem:permeation} guarantees that the standard part of this operator evaluates to the classical derivative. This allows us to map the infinitesimal quotient directly to the classical derivative via the permeability relation $\rho$.

\begin{defn}[Permeability relation for derivatives]
\label{def:permeability-deriv}
The permeability relation $\rho \subseteq \mathrm{Sent}(\Sigma_S) \times \mathrm{Sent}(\Sigma_T)$ is taken to contain all pairs of the form
\[
(\phi,\psi) \quad\text{where}\quad 
\phi \equiv \St(\Der_f(\mathbf{x},\epsilon)) = y,\;
\psi \equiv g'(x) = y,
\]
where $f$ is the canonical extension of the real-analytic function $g$, $x = \St(\mathbf{x})$, and $y \in \R$.
\end{defn}

\noindent\textbf{Permeation of derivatives for analytic extensions}
While microstability is a useful general notion, the standard-part evaluation of the Newton quotient requires a stronger condition, i.e. the function must be given locally by a convergent power series in the
infinitesimal part Let $g:\R\to\R$ be real-analytic.  Its \emph{canonical extension} to the finite elements of $\Rzl$ is defined by
\[
f(\mathbf{x}) \;=\; \sum_{k=0}^{\infty}
\frac{g^{(k)}(\St(\mathbf{x}))}{k!}\,
\bigl(\mathbf{x}-\St(\mathbf{x})\bigr)^{k},
\qquad \mathbf{x}\in\mathcal{F}.
\]
Because $\mathbf{x}-\St(\mathbf{x})$ has valuation $\ge 1$, the
$k$-th term has valuation $k$, so the series converges in the valuation
topology and yields a well-defined element $f(\mathbf{x})\in\Rzl$.  If
$\mathbf{x}$ is already a standard real $a$, the series reduces to
$g(a)$, so $f$ extends $g$.

\begin{lem}[Permeation of derivatives for analytic extensions]\label{lem:permeation}
Let $g:\R\to\R$ be real-analytic and $f$ its canonical extension as
above.  Then for every finite $\mathbf{x}$,
\[
\St\!\left(\frac{f(\mathbf{x}+\epsilon)-f(\mathbf{x})}{\epsilon}\right)
\;=\; g'\bigl(\St(\mathbf{x})\bigr).
\]
\end{lem}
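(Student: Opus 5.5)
The plan is to set $a=\St(\mathbf{x})$ and write $\mathbf{x}=a+\eta$ with $\eta=\mathbf{x}-a$, so that $\val(\eta)\ge 1$. The key observation is that $\mathbf{x}+\epsilon$ has the same standard part $a$, since $\St$ is a ring homomorphism on $\mathcal{F}$ (\Cref{lem:Standardparthomomorphism}) and $\St(\epsilon)=0$. Hence both $f(\mathbf{x})$ and $f(\mathbf{x}+\epsilon)$ are expanded around the \emph{same} centre $a$:
\[
f(\mathbf{x}+\epsilon)-f(\mathbf{x})=\sum_{k\ge 1}\frac{g^{(k)}(a)}{k!}\bigl((\eta+\epsilon)^k-\eta^k\bigr).
\]
The $k=0$ terms cancel exactly. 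Rearranging the two series termwise is justified because each converges in $\tau_v$: the $k$-th term has valuation $\ge k$, so each coefficient of the result is a finite sum.

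Next I expand $(\eta+\epsilon)^k-\eta^k=\sum_{j=1}^{k}\binom{k}{j}\eta^{k-j}\epsilon^{j}$ by the binomial theorem, which is valid in the commutative ring $\Rzl$. Every summand is divisible by $\epsilon$. Dividing by $\epsilon$ (multiplication by $\omega$ shifts indices by one and is $\tau_v$-continuous) gives
\[
\frac{f(\mathbf{x}+\epsilon)-f(\mathbf{x})}{\epsilon}=\sum_{k\ge1}\frac{g^{(k)}(a)}{k!}\sum_{j=1}^{k}\binom{k}{j}\eta^{k-j}\epsilon^{j-1}.
\]
The summand indexed by $(k,j)$ has valuation at least $(k-j)+(j-1)=k-1$. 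This holds uniformly in $j$, so the double series still converges in $\tau_v$ and we can read off its index-$0$ coefficient.

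The index-$0$ coefficient receives contributions only from summands of valuation $\le 0$. This forces $k-j=0$ (or $\eta$ to absorb no valuation) and $j-1=0$, that is $k=j=1$. More precisely, for $k\ge2$ either $k-j\ge1$ or $j\ge2$, and in both cases the valuation of the summand is $\ge1$. Thus the only term surviving at index $0$ is $g'(a)\binom11\eta^0\epsilon^0=g'(a)$. The expression is therefore finite, and its standard part is $g'(a)=g'(\St(\mathbf{x}))$.

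The main obstacle I expect is the rigorous justification of the infinite-series manipulations in the valuation topology. This covers the termwise subtraction, the termwise division by $\epsilon$, and the claim that $\St$ (the coefficient at index $0$) of a $\tau_v$-convergent series equals the sum of the coefficients at index $0$. I would handle this with a single lemma. It states that if the valuations of the terms of a series tend to $\infty$, then each coordinate of the sum is the finite sum of the corresponding coordinates of the terms. With that lemma, every step above reduces to finite bookkeeping.
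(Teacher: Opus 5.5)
Your proposal is correct and follows essentially the same route as the paper. Both arguments write $\mathbf{x}=a+\eta$, expand $(\eta+\epsilon)^k$ binomially, cancel the $j=0$ terms, divide by $\epsilon$, and bound valuations. The only real difference is the last step: the paper regroups the $j=1$ terms into the Taylor series $g'(a+\eta)$ and then takes its standard part, whereas you read off the index-$0$ coefficient directly, where only $(k,j)=(1,1)$ contributes. You also make explicit two things the paper leaves implicit: that $\St(\mathbf{x}+\epsilon)=a$, so both expansions share the centre $a$, and that the $\tau_v$-series manipulations reduce to coordinatewise finite sums.
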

\begin{proof}
Write $\mathbf{x}=a+\eta$ with $a=\St(\mathbf{x})$ and $\eta$
infinitesimal.  Then
\begin{align*}
f(\mathbf{x}+\epsilon) 
&= \sum_{k=0}^{\infty} \frac{g^{(k)}(a)}{k!}\,(\eta+\epsilon)^k \\
&= \sum_{k=0}^{\infty} \frac{g^{(k)}(a)}{k!}
     \sum_{j=0}^{k}\binom{k}{j}\eta^{\,k-j}\epsilon^{\,j}.
\end{align*}
The term $j=0$ gives $f(\mathbf{x})$.  Subtracting $f(\mathbf{x})$ and
dividing by $\epsilon$ leaves
\[
\frac{f(\mathbf{x}+\epsilon)-f(\mathbf{x})}{\epsilon}
 = \sum_{k=1}^{\infty} \frac{g^{(k)}(a)}{k!}
     \sum_{j=1}^{k}\binom{k}{j}\eta^{\,k-j}\epsilon^{\,j-1}.
\]
The inner sum starts at $j=1$, and for $j=1$ we obtain
\[
\frac{g^{(k)}(a)}{k!}\,k\,\eta^{\,k-1}
 = \frac{g^{(k)}(a)}{(k-1)!}\,\eta^{\,k-1}.
\]
Summing over $k\ge 1$ yields
\[
\sum_{m=0}^{\infty} \frac{g^{(m+1)}(a)}{m!}\,\eta^{\,m}
 = g'(a+\eta) \quad\text{(Taylor series of $g'$)}.
\]
All remaining terms ($j\ge 2$) contain at least one factor $\epsilon$,
so their valuation is $\ge 1$.  Hence
\[
\frac{f(\mathbf{x}+\epsilon)-f(\mathbf{x})}{\epsilon}
 = g'(a+\eta) + \delta,\qquad \val(\delta)\ge 1.
\]
Taking the standard part kills $\delta$ and gives
$\St(g'(a+\eta)) = g'(a)$, because $g'$ is continuous on $\R$ and
$\eta$ is infinitesimal.
\end{proof}

The restriction to analytic extensions is harmless for calculus. Polynomials, $\exp$, $\sin$, $\cos$, and all elementary transcendental functions are still covered, and the class is closed under addition, multiplication, and composition (the composition of canonical extensions of $g$ and $h$ is the canonical extension of $g\circ h$ whenever $h$ is real-analytic).  The earlier closure lemma for
microstable functions remains true but is now irrelevant for derivative permeation. It is kept for later use in the continuity and integration
sections.

% \subsection{Derivatives of Elementary and Singular Functions}
% For a polynomial $p(\mathbf{x}) = \sum_{k=0}^{n} a_k \mathbf{x}^{k}$, expanding gives
% \[
% \frac{p(\mathbf{x}+\epsilon)-p(\mathbf{x})}{\epsilon}
% = \sum_{k=1}^{n} a_k\Bigl(k\mathbf{x}^{k-1} + \binom{k}{2}\mathbf{x}^{k-2}\epsilon + \dots + \epsilon^{k-1}\Bigr),
% \]
% and taking the standard part eliminates all terms containing a positive power of $\epsilon$, leaving the classical derivative $\sum k a_k x^{k-1}$. The same formal manipulation applies to any convergent power series. Hence, the exponential, sine, and cosine functions, defined by their Maclaurin series, all satisfy the permeation lemma with their classical derivatives $e^{x}$, $\cos x$, and $-\sin x$ respectively. Explicit expansions confirm this. For $\mathbf{x} = x + a\epsilon$ one obtains, for instance,
% \[
% \sin(x+a\epsilon) = \langle \widehat{\sin x},\, a\cos x,\, -\tfrac{a^{2}}{2!}\sin x,\, \dots \rangle,
% \]
% and similarly for cosine and exponential.

% Even singular functions can be treated within this scheme without breakdown. The signum function $\sign(\mathbf{x}) = 1,0,-1$ according to whether $\St(\mathbf{x})>0, =0, <0$ is also microstable. Its difference quotient is identically zero because $\St(\mathbf{x}+\epsilon) = \St(\mathbf{x})$, so $\St(\Der_{\sign}) = 0$. This coincides with the distributional derivative of the Heaviside step function, showing that the C\&P framework naturally handles singularities.

\vspace{-0.5cm}
\subsection{Derivatives of Elementary and Singular Functions}
For a polynomial $p(\mathbf{x}) = \sum_{k=0}^{n} a_k \mathbf{x}^{k}$
restricted to the finite domain $\mathcal{F}$, expanding the difference
quotient gives
\[
\frac{p(\mathbf{x}+\epsilon)-p(\mathbf{x})}{\epsilon}
= \sum_{k=1}^{n} a_k\Bigl(k\mathbf{x}^{k-1}
   + \binom{k}{2}\mathbf{x}^{k-2}\epsilon + \dots + \epsilon^{k-1}\Bigr).
\]
Because $\mathbf{x}$ is finite, all terms $k\mathbf{x}^{k-1}$ evaluate
to finite elements.  Taking the standard part eliminates all terms
containing a positive power of $\epsilon$, leaving exactly the classical
derivative $\sum k a_k x^{k-1}$, where $x = \St(\mathbf{x})$.

The same formal manipulation applies to any convergent power series
evaluated on finite elements.  Hence the exponential, sine, and cosine
functions, defined by their Maclaurin series, are microstable and
satisfy the permeation lemma with their classical derivatives $e^{x}$,
$\cos x$, and $-\sin x$ respectively.  Explicit expansions confirm this.
For $\mathbf{x} = x + a\epsilon$ one obtains, for instance,
\[
\sin(x+a\epsilon) = \langle \widehat{\sin x},\, a\cos x,\,
-\tfrac{a^{2}}{2!}\sin x,\, \dots \rangle,
\]
and similarly for cosine and exponential.

Even singular functions can be treated within this scheme without
breakdown.  Consider the \emph{signum} function defined by
\[
\sign(\mathbf{x}) =
\begin{cases}
 1 & \text{if } \St(\mathbf{x}) > 0,\\
 0 & \text{if } \St(\mathbf{x}) = 0,\\
-1 & \text{if } \St(\mathbf{x}) < 0.
\end{cases}
\]
It is microstable (it depends only on the standard part) but is not
the canonical extension of a real-analytic function, so the permeation lemma does not apply. Direct evaluation of the Newton quotient gives
\[
\frac{\sign(\mathbf{x}+\epsilon)-\sign(\mathbf{x})}{\epsilon} = 0
\]
because $\sign(\mathbf{x}+\epsilon)=\sign(\mathbf{x})$ for all
$\mathbf{x}$ (the standard part of $\mathbf{x}+\epsilon$ equals the
standard part of $\mathbf{x}$).  Hence
$\St(\Der_{\sign}(\mathbf{x},\epsilon)) = 0$, which coincides with the
classical derivative of the signum function everywhere except at $0$, where the classical derivative does not exist but the standard part naturally assigns the value $0$.  This example shows that the framework can handle non-analytic functions, although the permeation
of derivatives is guaranteed only for analytic extensions.

\begin{rem}[Nuance of the signum function]
The definition of $\sign(\mathbf{x})$ based on $\St(\mathbf{x})$ is a deliberate choice that trades internal field exactness for microstability. In the totally ordered field $\Rzl$, the positive infinitesimal $\epsilon$ satisfies $\epsilon > 0$ strictly. If we had defined an \emph{internal} signum function based on the field's lexicographic order, it would evaluate to $1$ at $\epsilon$ and $0$ at $0$. However, this internal signum is not microstable because evaluating the Newton quotient at $0$ would yield $(1 - 0)/\epsilon = \omega$, an infinite element, causing permeation to fail. By defining the signum via the standard part, $\sign(\epsilon) = 0$, we ensure the function remains insensitive to infinitesimal perturbations (microstable), allowing its distributional derivative to permeate successfully.
\end{rem}

\vspace{-0.5cm}
\section{Continuity and Convergence in \texorpdfstring{$\Rzl$}{Rzl}}
\label{sec:continuity}

The two topologies also shed light on continuity and convergence.
The $(k,n)$-continuity hierarchy is defined purely in terms of the
valuation, i.e. within $\tau_v$, while the subsequent analysis of
convergence explicitly distinguishes valuational convergence (in
$\tau_v$) from standard convergence (in $\tau_{\St}$), thereby exploiting
the complementary roles of the two topologies.

\vspace{-0.5cm}
\subsection{\texorpdfstring{$(k,n)$}{kn}-continuity and Its Hierarchy}
The valuation on $\Rzl$ allows us to quantify how infinitesimal a change is, leading to a fine-grained refinement of classical continuity. The classical $\epsilon$-$\delta$ definition of continuity does not distinguish between different infinitesimal magnitudes. The valuation enables us to stratify continuity in a way that is invisible in $\R$. For $k,n\in\N$, a function $f$ is \emph{$(k,n)$-continuous} at $\mathbf{c}$ if
\[
\forall\mathbf{x}\; \bigl( \val(\mathbf{x}-\mathbf{c}) \ge n \implies \val(f(\mathbf{x})-f(\mathbf{c})) \ge k \bigr).
\]
In words, a perturbation of infinitesimal order at least $n$ in the input produces a change of infinitesimal order at least $k$ in the output.

\begin{thm}[Hierarchy of $(k,n)$-continuity]\label{thm:continuity-hierarchy}
For any $f:\Rzl\to\Rzl$:
\begin{enumerate}[label=(\roman*)]
\item If $f$ is $(k,n)$-continuous, then it is $(k,n+1)$-continuous.
\item If $f$ is $(k+1,n)$-continuous, then it is $(k,n)$-continuous.
\item If $f$ is $(k,n)$-continuous for all $k,n\in\N$, then $f$ is constant on every infinitesimal neighbourhood, i.e. for every $\mathbf{c}$ and every $\mathbf{x}$ with $\val(\mathbf{x}-\mathbf{c}) > 0$, $f(\mathbf{x}) = f(\mathbf{c})$.
\item (Composition) If $f$ is $(k,n)$-continuous and $g$ is $(n,q)$-continuous, then $f\circ g$ is $(k,q)$-continuous.
\item (Product) If $f$ is $(k,n)$-continuous, $g$ is $(l,o)$-continuous, and both are bounded (finite valuation), then $fg$ is $(\min(k,l),\max(n,o))$-continuous.
\end{enumerate}
\end{thm}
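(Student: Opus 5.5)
The plan is to argue directly from the definition of $(k,n)$-continuity, using only two facts about the valuation: the ultrametric inequality $\val(\mathbf{a}+\mathbf{b})\ge\min(\val(\mathbf{a}),\val(\mathbf{b}))$, and multiplicativity $\val(\mathbf{a}\mathbf{b})=\val(\mathbf{a})+\val(\mathbf{b})$. Multiplicativity holds because the product of the lowest nonzero coefficients of $\mathbf{a}$ and $\mathbf{b}$ is the lowest nonzero coefficient of the Cauchy convolution. Throughout, $(k,n)$-continuity of a function without a specified point is read as holding at every point $\mathbf{c}$. That is the reading the composition clause needs, since $f$ has to be continuous at $g(\mathbf{c})$.

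Parts (i) and (ii) are monotonicity statements. For (i), $\val(\mathbf{x}-\mathbf{c})\ge n+1$ implies $\val(\mathbf{x}-\mathbf{c})\ge n$, so the hypothesis applies and gives output valuation at least $k$. For (ii), an output valuation at least $k+1$ is in particular at least $k$.

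For (iii), fix $\mathbf{c}$ and $\mathbf{x}$ with $\val(\mathbf{x}-\mathbf{c})>0$. Since valuations are integers, this means $\val(\mathbf{x}-\mathbf{c})\ge 1$. Applying $(k,1)$-continuity for every $k\in\N$ gives $\val(f(\mathbf{x})-f(\mathbf{c}))\ge k$ for all $k$. The only element of infinite valuation is $0$, so $f(\mathbf{x})=f(\mathbf{c})$.

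For (iv), suppose $\val(\mathbf{x}-\mathbf{c})\ge q$. Then $(n,q)$-continuity of $g$ at $\mathbf{c}$ gives $\val(g(\mathbf{x})-g(\mathbf{c}))\ge n$. Applying $(k,n)$-continuity of $f$ at the point $g(\mathbf{c})$ to the input $g(\mathbf{x})$ then yields $\val(f(g(\mathbf{x}))-f(g(\mathbf{c})))\ge k$.

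For (v), I would use the splitting
\[
f(\mathbf{x})g(\mathbf{x})-f(\mathbf{c})g(\mathbf{c})
= f(\mathbf{x})\bigl(g(\mathbf{x})-g(\mathbf{c})\bigr)+g(\mathbf{c})\bigl(f(\mathbf{x})-f(\mathbf{c})\bigr).
\]
If $\val(\mathbf{x}-\mathbf{c})\ge\max(n,o)$, then both continuity hypotheses apply, giving valuations at least $l$ and at least $k$ for the two differences.

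The main obstacle is pinning down the hypothesis ``bounded (finite valuation)''. Every nonzero element already has finite valuation, so that phrase is vacuous as written. What the estimate actually needs is that the cofactors $f(\mathbf{x})$ and $g(\mathbf{c})$ have nonnegative valuation, i.e.\ that $f$ and $g$ take values in the finite elements $\mathcal{F}$. I would make that interpretation explicit. Multiplicativity then shows that the first summand has valuation at least $l$ and the second at least $k$, and the ultrametric inequality gives at least $\min(k,l)$. I would also remark that if only $\val\ge -B$ is assumed, the same argument yields $(\min(k,l)-B,\max(n,o))$-continuity. So finiteness of the values is exactly what makes the stated indices correct, and without it the claim can fail (for instance, when the cofactor is $\omega$).
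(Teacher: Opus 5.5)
Your proof is correct and follows the paper's argument essentially step for step. Parts (i)--(iv) are the same direct unpacking of the definition; in (iii) you use $(k,1)$-continuity where the paper uses $(k,d)$ with $d=\val(\mathbf{x}-\mathbf{c})$, which is equivalent. In (v) the paper uses the mirror-image splitting $(f(\mathbf{x})-f(\mathbf{c}))g(\mathbf{x})+f(\mathbf{c})(g(\mathbf{x})-g(\mathbf{c}))$ with the same ultrametric and product-valuation estimate, and it reads ``bounded'' exactly as you propose: $f$ and $g$ take finite values, so the cofactors have nonnegative valuation.
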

\vspace{-0.3cm}
\begin{proof}
(i) and (ii) are immediate. (iii) Fix $\mathbf{c}$ and let $\mathbf{x}$ satisfy $\val(\mathbf{x}-\mathbf{c}) = d > 0$. Since $f$ is $(k,d)$-continuous for every $k$, we have $\val(f(\mathbf{x})-f(\mathbf{c})) \ge k$ for all $k$, forcing $f(\mathbf{x}) = f(\mathbf{c})$. (iv) $\val(\mathbf{x}-\mathbf{c})\ge q \implies \val(g(\mathbf{x})-g(\mathbf{c}))\ge n \implies \val(f(g(\mathbf{x}))-f(g(\mathbf{c})))\ge k$. (v) Let $m = \max(n,o)$. If $\val(\mathbf{x}-\mathbf{c}) \ge m$, then
$\val(f(\mathbf{x})-f(\mathbf{c})) \ge k$ and
$\val(g(\mathbf{x})-g(\mathbf{c})) \ge l$.
Because $f$ and $g$ are bounded, their values are finite;
hence $\val(g(\mathbf{x}))\ge 0$ and $\val(f(\mathbf{c}))\ge 0$.
Using the ultrametric inequality,
\begin{align*}
\val(fg(\mathbf{x})-fg(\mathbf{c}))
&\ge
\min\bigl\{
\val(f(\mathbf{x})-f(\mathbf{c}))+\val(g(\mathbf{x})),\\
&\qquad
\val(g(\mathbf{x})-g(\mathbf{c}))+\val(f(\mathbf{c}))
\bigr\}
\ge \min\{k,l\}.
\end{align*}
Thus $fg$ is $(\min(k,l),m)$-continuous.
\end{proof}

\begin{rem}
The boundedness condition in property (v) is strictly necessary in $\Rzl$. In classical real analysis, the product of continuous functions is always continuous. However, evaluating at infinite elements breaks this. For example, the identity function $f(\mathbf{x}) = \mathbf{x}$ is $(1,1)$-continuous. Yet, at the infinite point $\mathbf{x} = \omega$, an infinitesimal perturbation $d\epsilon$ yields $f(\omega + d\epsilon)^2 = \omega^2 + 2d + d^2\epsilon^2$. The change in the output is $2d + d^2\epsilon^2$, which has valuation $0$ (appreciable). Thus, $f(\mathbf{x})^2$ fails to be $(1,1)$-continuous at infinity. The finite valuation restriction correctly isolates the continuous behavior.
\end{rem}

The following example illustrates how a function can fail to be $(0,0)$-continuous yet be $(0,1)$-continuous, a distinction that is completely missed by the classical definition.

% \begin{exa}
% \label{ex:xandx+1fork-ncontinuity}
% Consider the function
% \[
% f(\mathbf{x}) =
% \begin{cases} 
% \mathbf{x} & \text{if } \St(\mathbf{x}) \le 1, \\
% \mathbf{x} + \mathbf{1} & \text{otherwise}.
% \end{cases}
% \]
% At the standard point $1$, all numbers infinitesimally close to $1$ form an infinitesimal neighbourhood that when magnified, the function appears as a vertical segment (\Cref{fig:xandx+1fork-ncontinuity}).  
% It is not $(0,0)$-continuous (e.g. take $\einf = 1/2$ and points near $\mathbf{1.5}$),
% % It is not $(0,0)$-continuous (take $\einf_{1_0}=1/2$ near $\mathbf{1.5}$),
% but it is $(0,1)$-continuous: any two points whose difference is of order $\ge 1$ have the same standard part, hence lie in the same piece of the function. The identity function $g(\mathbf{x}) = \mathbf{x}$ is $(0,0)$-continuous but not $(1,0)$-continuous; in fact, $g$ is $(k,n)$-continuous iff $k \le n$.
% \end{exa}

\begin{exa}
\label{ex:kn-continuity-shift}
The $(k,n)$-continuity hierarchy allows us to classify functions by how they shift infinitesimal degrees, a distinction invisible in classical analysis. Consider the function $f(\mathbf{x}) = \omega \mathbf{x}$, which acts as an infinitesimal magnifier.
\begin{itemize}[nosep]
    \item $f$ is \textbf{$(0,1)$-continuous}: If the input is perturbed by a first-order infinitesimal, $\val(\mathbf{x}-\mathbf{y}) \ge 1$, the change in output is $\val(\omega(\mathbf{x}-\mathbf{y})) = -1 + \val(\mathbf{x}-\mathbf{y}) \ge 0$. Thus, a first-order infinitesimal change produces a finite (valuation $\ge 0$) change.
    \item $f$ is \textbf{not $(1,1)$-continuous}: Let $\mathbf{x} = \epsilon$ and $\mathbf{y} = 0$. The input change has valuation $1$, but the output change is $f(\epsilon) - f(0) = \omega\epsilon = \mathbf{1}$, which has valuation $0 \not\ge 1$.
    \item $f$ is \textbf{not $(0,0)$-continuous}: Let $\mathbf{x} = \mathbf{1}$ and $\mathbf{y} = 0$. The input change has valuation $0$, but the output change is $f(\mathbf{1}) - f(0) = \omega$, which has valuation $-1 \not\ge 0$.
\end{itemize}
Unlike the classical identity function (which is $(n,n)$-continuous for all $n$), $f(\mathbf{x}) = \omega \mathbf{x}$ uniformly degrades the continuity degree by one level.
\end{exa}

% \begin{figure}[ht]
%     \centering
%     \includegraphics[width=.3\linewidth]{Illustrationf_x_equaltoxforstd_x.jpg}
%     \caption{Illustration of $f(\mathbf{x})$ from Example \ref{ex:xandx+1fork-ncontinuity}}
%     \label{fig:xandx+1fork-ncontinuity}
% \end{figure}

\Cref{fig:Illustrationofeinf1andeinf2} provides a geometric illustration of the $(k,n)$-continuity condition.
\begin{figure}[ht]
    \centering
    \subfloat[An $\einf_1$ bound and its $\einf_2$ neighbourhood.]{%
        \includegraphics[width=.3\linewidth]{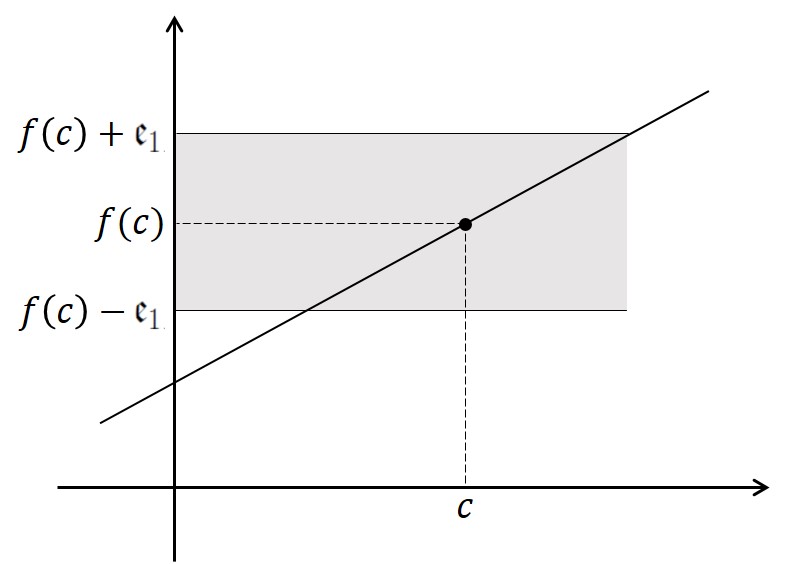}\hfill
        \includegraphics[width=.3\linewidth]{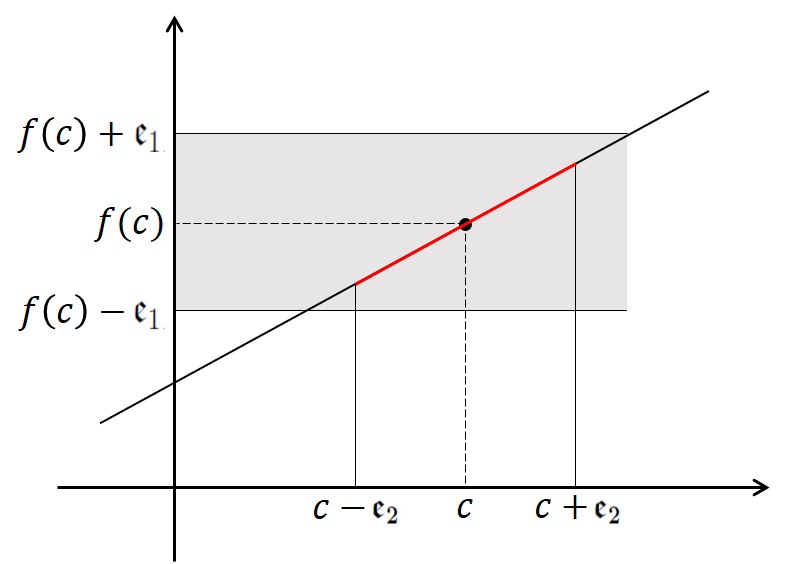}
    }
    \par\medskip
    \subfloat[A smaller bound and its finer neighbourhood.]{%
        \includegraphics[width=.4\linewidth]{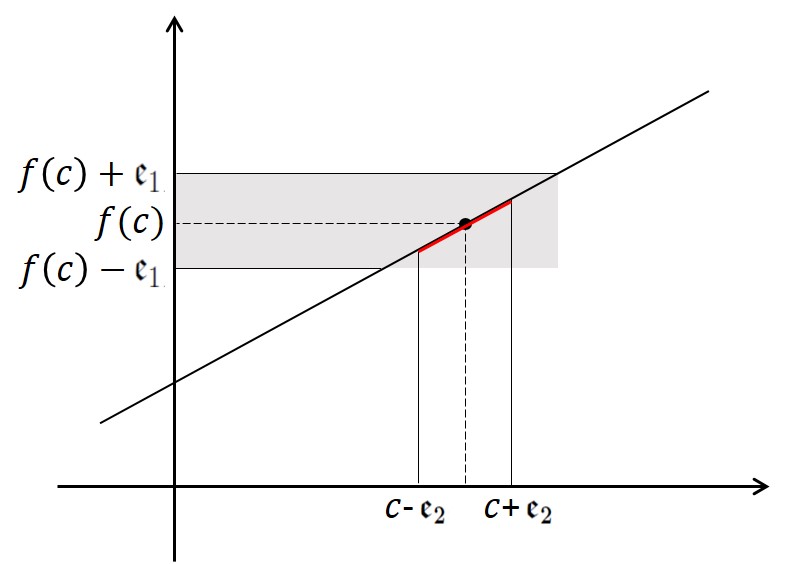}
    }
    \caption{Illustration of the $\einf_1$-$\einf_2$ condition for continuity.}
    \label{fig:Illustrationofeinf1andeinf2}
\end{figure}

\vspace{-0.5cm}
\subsection{Extreme Value, Intermediate Value, and Continuity}
\vspace{-0.3cm}
We recall the definitions of the extreme value property (EVP) and the intermediate value property (IVP) for functions with values in an ordered field. In classical real analysis, the $\epsilon$-$\delta$ definition of continuity (ED) implies both EVP and IVP. In $\Rzl$ the situation is different, i.e. the valuation topology is totally disconnected and closed bounded intervals are not compact, so the classical proofs do not carry over. For instance, the indicator of the infinitesimals,
$\mathbf{1}_{\Delta}$, is valuation-continuous but fails the
IVP (it takes only $0$ and $1$ on $[0,1]$). The EVP also fails in general: let $(q_n)_{n\in\N}$
be an enumeration of the rationals in $(0,1)$ and define
\[
f(\mathbf{x}) =
\begin{cases}
1 - \frac{1}{n+1}, & \text{if } \St(\mathbf{x}) = q_n,\\
0, & \text{otherwise}.
\end{cases}
\]
Since the sets $\{\mathbf{x} : \St(\mathbf{x}) = q_n\}$ are precisely the open balls of radius $1$ (clopen) in the valuation topology,
$f$ is locally constant and hence valuation-continuous on $[0,1]$.
Its supremum on $[0,1]$ is $1$, but $f(\mathbf{x})<1$ for all
$\mathbf{x}$, so the maximum is never attained.  Conversely,
$f(x)=1/x$ on $(0,1]$ satisfies IVP but not EVP. Hence ED, EVP, and IVP are pairwise independent in $\Rzl$. 

\vspace{-0.5cm}
\subsection{Convergence of Sequences}
We distinguish three modes of convergence for sequences in $\Rzl$, each corresponding to a different level of detail.

% \begin{defn}[Valuational convergence]
% A sequence $(s_n)$ \emph{valuationally converges} (hyperconverges) to $s$ if $\lim_{n\to\infty}\val(s_n-s) = \infty$.
% \end{defn}

% \begin{defn}[Standard convergence]
% Let $(s_n)$ be a sequence of \emph{finite} elements of $\Rzl$ (i.e.\
% $s_n\in\mathcal{F}$ for all $n$) and $s\in\mathcal{F}$.  We say that
% $(s_n)$ \emph{standardly converges} to $s$ if
% $\lim_{n\to\infty}\diss(s_n,s)=0$, where $\diss$ is the pseudo-metric
% $\diss(\mathbf{x},\mathbf{y}) = |\St(\mathbf{x})-\St(\mathbf{y})|$ on
% $\mathcal{F}$.
% \end{defn}

% \begin{defn}[Coefficientwise convergence]
% $(s_n)$ \emph{coefficientwise converges} to $s$ if for every index $j\in\Z$, the real sequence $((s_n)_j)$ converges to $s_j$.
% \end{defn}

\begin{defn}[Modes of convergence]
We distinguish three modes of convergence for a sequence $(s_n)$ to a limit $s$ in $\Rzl$:
\begin{enumerate}[label=(\roman*), nosep]
    \item \textbf{Valuational (hyperconvergence):} $\lim_{n\to\infty}\val(s_n-s) = \infty$.
    \item \textbf{Standard:} If $s_n, s \in \mathcal{F}$, we require $\lim_{n\to\infty}\diss(s_n,s)=0$.
    \item \textbf{Coefficientwise:} For every index $j\in\Z$, the real sequence $((s_n)_j)$ converges to $s_j$.
\end{enumerate}
\end{defn}

\begin{prop}
\label{prop:convergence_impl}
In $\Rzl$, valuational convergence implies standard convergence, but the converse fails.
\end{prop}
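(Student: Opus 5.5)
The argument has two halves: the forward implication via the standard part homomorphism, and a separating sequence for the converse.

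\emph{Forward direction.} Let $(s_n)$ valuationally converge to $s$, with $s_n,s\in\mathcal{F}$ so that standard convergence is meaningful. Since $\val(s_n-s)\to\infty$, there is $N$ with $\val(s_n-s)\ge 1$ for all $n\ge N$. Then $s_n-s$ is infinitesimal; in particular its coefficient at index $0$ vanishes. By \Cref{lem:Standardparthomomorphism}, $\St$ is additive on $\mathcal{F}$ and its kernel is exactly the infinitesimals. Hence
\[
\St(s_n)-\St(s)=\St(s_n-s)=0 \qquad\text{for } n\ge N,
\]
so $\diss(s_n,s)=0$ eventually, and a fortiori $\diss(s_n,s)\to 0$. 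Valuational convergence therefore forces the standard parts to agree exactly from some index on, which is much stronger than standard convergence.

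\emph{Converse fails.} We exhibit a sequence that converges standardly but not valuationally. Take $s_n=\tfrac1n\in\R\subset\mathcal{F}$ and $s=0$. Then $\diss(s_n,0)=1/n\to 0$. However, $s_n-0$ has nonzero coefficient at index $0$, so $\val(s_n)=0$ for every $n$, and $\val(s_n-s)\not\to\infty$.

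Alternatively, the constant sequence $s_n=\epsilon$ with $s=0$ works: $\diss=0$ throughout, yet $\val(s_n-s)=1$ for all $n$. This shows that the failure of the converse is not merely about rates of convergence. Standard convergence is blind to infinitesimal differences, whereas valuational convergence detects them.

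\emph{Remaining check.} The only care needed is that standard convergence is defined only when all terms lie in $\mathcal{F}$. In the forward direction we assume this as part of the hypothesis. Even if only $s$ is assumed finite, the tail $s_n=s+(s_n-s)$ is finite for $n\ge N$, since it differs from $s$ by an infinitesimal. That is enough, because convergence concerns only tails.
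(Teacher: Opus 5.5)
Your proposal is correct and takes essentially the same route as the paper: eventually $\val(s_n-s)\ge 1$, so the standard part of the difference vanishes, and $s_n=1/n$ with $\val(1/n)=0$ is exactly the paper's counterexample to the converse. Your extra constant example $s_n=\epsilon$ and your check that the tail stays in $\mathcal{F}$ are correct refinements that the paper leaves implicit.
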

\vspace{-0.5cm}
\begin{proof}
If $\val(s_n-s)\to\infty$, then $\val(s_n-s)\ge 1$ eventually, so the standard part of the difference is $0$; hence $\diss(s_n,s)\to0$. The converse fails: $s_n = 1/n$ (a sequence of finite elements) standardly converges to $0$ but $\val(1/n)=0$ for all $n$.
\end{proof}

\vspace{-0.4cm}
Standard convergence is not unique (e.g. $n\epsilon$ standardly converges to $a\epsilon$ for any $a$). Coefficientwise convergence is the strictest, e.g.  sequences like $\epsilon^m$ do not converge coefficientwise.

\begin{defn}[$\einf$-continuous]
A function $f:\Rzl\to\Rzl$ is \emph{$\einf$-continuous} (or continuous in the valuation topology $\tau_v$) at $\mathbf{x}_0$ if for every $\einf>0$ there exists $\delta>0$ such that $\dis(\mathbf{x},\mathbf{x}_0)<\delta$ implies $\dis(f(\mathbf{x}),f(\mathbf{x}_0))<\einf$. In terms of the valuation this is equivalent to: for every $k\in\N$ there exists $n\in\N$ such that $\val(\mathbf{x}-\mathbf{x}_0)\ge n$ implies $\val(f(\mathbf{x})-f(\mathbf{x}_0))\ge k$.
\end{defn}

\begin{thm}
A function $f:\Rzl\to\Rzl$ is $\einf$-continuous at $\mathbf{x}_0$ iff for every sequence $(x_n)$ hyperconverging to $\mathbf{x}_0$, $(f(x_n))$ hyperconverges to $f(\mathbf{x}_0)$.
\end{thm}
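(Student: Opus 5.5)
This is the sequential characterisation of continuity in a first-countable space. I would prove it directly from the valuation reformulation of $\einf$-continuity given in the definition, rather than quoting general topology. Throughout I use that $\dis(\mathbf{x},\mathbf{y})<2^{-k}$ iff $\val(\mathbf{x}-\mathbf{y})>k$. Hence hyperconvergence $\val(s_n-s)\to\infty$ is exactly convergence in the ultrametric $\dis$. By \Cref{thm:topology}(ii), the balls $B_v(\mathbf{x}_0,2^{-k})$ form a countable local base, which is the structural fact the converse direction needs.

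\emph{Forward direction.} Assume $f$ is $\einf$-continuous at $\mathbf{x}_0$ and $\val(x_n-\mathbf{x}_0)\to\infty$. Fix $k\in\N$ and choose $n_k$ with
\[
\val(\mathbf{x}-\mathbf{x}_0)\ge n_k \implies \val(f(\mathbf{x})-f(\mathbf{x}_0))\ge k .
\]
Hyperconvergence gives $N$ such that $\val(x_n-\mathbf{x}_0)\ge n_k$ for all $n\ge N$. For those $n$ we then have $\val(f(x_n)-f(\mathbf{x}_0))\ge k$. Since $k$ was arbitrary, $\val(f(x_n)-f(\mathbf{x}_0))\to\infty$.

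\emph{Converse, by contraposition.} Suppose $f$ is not $\einf$-continuous at $\mathbf{x}_0$. Then there is some $k$ such that for every $n\in\N$ we can pick $x_n$ with $\val(x_n-\mathbf{x}_0)\ge n$ but $\val(f(x_n)-f(\mathbf{x}_0))<k$. The sequence $(x_n)$ hyperconverges to $\mathbf{x}_0$ by construction. However, $\val(f(x_n)-f(\mathbf{x}_0))$ stays below $k$ for every $n$, so it does not tend to $\infty$, and $(f(x_n))$ fails to hyperconverge to $f(\mathbf{x}_0)$. This contradicts the hypothesis.

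\emph{Main obstacle.} The only delicate step is the choice of the witnesses $x_n$ in the converse. This uses countable choice, which sits uneasily with the paper's constructive claims. I would handle it by making the witnesses explicit from the negated condition where possible, or else note that countable choice is weaker than the ultrafilter machinery the paper avoids. A minor bookkeeping point is the difference between $\ge n$ in the definition and strict ball inequalities; it costs at most an index shift of one and changes nothing.
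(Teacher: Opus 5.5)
Your proof is correct and is essentially the paper's argument: the forward direction comes straight from the valuation form of $\einf$-continuity, and the converse goes by contraposition, fixing a bad $k$ and picking $x_n$ with $\val(x_n-\mathbf{x}_0)\ge n$ but $\val(f(x_n)-f(\mathbf{x}_0))<k$. You fill in the details that the paper compresses into a one-line sketch, and your remark that this step uses countable choice (and classical logic to extract the bad $k$) is a fair caveat the paper leaves unmentioned.
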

\vspace{-0.5cm}
\begin{proof}
The forward direction follows directly from the definitions. The converse is proved by contrapositive, constructing a hyperconverging sequence whose images do not hyperconverge if continuity fails.
\end{proof}
\vspace{-0.6cm}
\section{Integration in \texorpdfstring{$\Rzl$}{Rzl}}
\label{sec:integration}

\vspace{-0.4cm}
In traditional nonstandard analysis, the definite integral is formulated as a hyperfinite Riemann sum, $\sum_{n=1}^{\omega} \epsilon\, f(a+n\epsilon)$. However, in our purely algebraic field $\Rzl$, $\omega=\epsilon^{-1}$ is an element of the field, not an integer index, making such an iteration undefined without invoking a non-constructive transfer principle. Instead, we exploit the formal series structure of $\Rzl$ and restrict our attention to the class of functions already used for derivatives: canonical extensions of real-analytic functions.

Recall from \Cref{sec:calculus} the canonical extension $f(\mathbf{x})$ of a real-analytic function $g:\R\to\R$. Let $G$ be any real antiderivative of $g$ (so $G'=g$) and let $F$ be the canonical extension of $G$. For finite $\mathbf{a},\mathbf{b}$ we define the \emph{algebraic integral} of $f$ over $[\mathbf{a},\mathbf{b}]$ by
\begin{equation}\label{eq:alg_int}
\int_{\mathbf{a}}^{\mathbf{b}} f(\mathbf{x})\,d\mathbf{x}
\;:=\; F(\mathbf{b}) - F(\mathbf{a}) \;\in\; \Rzl .
\end{equation}

% Because we define the integral as the evaluation of the algebraic antiderivative, the Fundamental Theorem of Calculus holds by definition within $\Sigma_S$. 
Since the algebraic integral is defined via the antiderivative, it satisfies the fundamental relation $F(b) – F(a)$ by construction. The classical FTC then emerges after permeation. The definition does not depend on the choice of $G$, i.e. any two antiderivatives differ by a constant, and $F$ changes by the same standard constant, leaving the difference unchanged.

\begin{thm}[Permeation of the integral]\label{thm:FTC}
Let $f$ be the canonical extension of a real-analytic $g$, and let
$a = \St(\mathbf{a})$, $b = \St(\mathbf{b})$.  Then
\[
\St\!\left(\int_{\mathbf{a}}^{\mathbf{b}} f(\mathbf{x})\,d\mathbf{x}\right)
\;=\; \int_{a}^{b} g(x)\,dx ,
\]
where the right-hand side is the classical Riemann integral of $g$.
\end{thm}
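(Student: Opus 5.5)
The plan is to reduce everything to the standard part of the canonical extension $F$, and then invoke the classical Fundamental Theorem of Calculus inside the target chunk $\Sigma_T$. By definition \eqref{eq:alg_int}, the algebraic integral equals $F(\mathbf{b})-F(\mathbf{a})$. Both $F(\mathbf{b})$ and $F(\mathbf{a})$ are finite, because each is a series whose $k$-th term has valuation $k\ge 0$. \Cref{lem:Standardparthomomorphism} then lets us write
\[
\St\!\left(\int_{\mathbf{a}}^{\mathbf{b}} f(\mathbf{x})\,d\mathbf{x}\right)=\St(F(\mathbf{b}))-\St(F(\mathbf{a})).
\]
So the whole statement reduces to the identity $\St(F(\mathbf{x}))=G(\St(\mathbf{x}))$ for every finite $\mathbf{x}$.

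For that identity, write $\mathbf{x}=c+\eta$ with $c=\St(\mathbf{x})$ and $\val(\eta)\ge 1$. Then
\[
F(\mathbf{x})=G(c)+\sum_{k\ge 1}\frac{G^{(k)}(c)}{k!}\eta^k .
\]
Each term with $k\ge1$ has valuation at least $k\ge 1$, so its index-$0$ coefficient vanishes. The index-$0$ coefficient of $F(\mathbf{x})$ is therefore $G(c)$.

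There is one point that needs care here. The series converges only in the valuation topology, so we must check that the index-$0$ coefficient of the limit equals the limit of the index-$0$ coefficients. This holds because, for a fixed index $j$, only the finitely many terms with $k\le j$ contribute to coordinate $j$. Valuational convergence therefore stabilises each coordinate after finitely many terms.

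Hence $\St(F(\mathbf{b}))-\St(F(\mathbf{a}))=G(b)-G(a)$. Since $g$ is real-analytic, it is continuous, and the classical Fundamental Theorem of Calculus in $\Sigma_T$ gives $G(b)-G(a)=\int_a^b g(x)\,dx$ as a Riemann integral. This is exactly the pair $(\St(E)=a,\,E_{\R}=a)$ in $\rho$, so the equation permeates.

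I expect the main obstacle to be a subtlety in the definition rather than in the computation. A real-analytic $g$ has an antiderivative $G$ that is real-analytic on all of $\R$, but the Taylor series of $G$ at $c$ need not converge to $G$ in any real sense. The canonical extension, however, only uses that series formally in the infinitesimal $\eta$, so convergence in $\tau_v$ is automatic. I will note this explicitly, so that $F$ is well defined for any analytic $G$ and only the constant term $G(c)$ matters.

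I will also remark that independence from the choice of $G$ is consistent with the argument. Replacing $G$ by $G+C$ changes $F$ by the standard constant $C$, which cancels in $F(\mathbf{b})-F(\mathbf{a})$ and likewise in $G(b)-G(a)$.
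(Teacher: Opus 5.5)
Your proposal is correct and follows the paper's proof: reduce to the identity $\St(F(\mathbf{x}))=G(\St(\mathbf{x}))$ for finite $\mathbf{x}$, then apply the classical Fundamental Theorem of Calculus; your coordinatewise-stabilisation argument supplies the detail the paper compresses into ``the series for $F$ collapses to $G(\St(\mathbf{x}))$ under the standard part.'' One small correction to a side remark: for real-analytic $G$ the Taylor series at $c$ does converge to $G$ on a real neighbourhood of $c$, and only convergence on all of $\R$ can fail. As you note, though, valuational convergence of the canonical extension does not depend on this at all.
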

\vspace{-0.5cm}
\begin{proof}
With $F$ as above, $\St(F(\mathbf{x})) = G(\St(\mathbf{x}))$ for every
finite $\mathbf{x}$, because the series for $F$ collapses to
$G(\St(\mathbf{x}))$ under the standard part.  Hence
\[
\St\bigl(F(\mathbf{b})-F(\mathbf{a})\bigr)
= G(b)-G(a) = \int_{a}^{b} g(x)\,dx .
\]
\end{proof}

\vspace{-0.5cm}
The theorem shows that the algebraic integral in $\Rzl$, when its
standard part is taken, coincides with the classical integral.  Thus
the Chunk \& Permeate strategy succeeds for integration just as it did
for derivatives. The pair
\[
\bigl(\,\St(\textstyle\int_{\mathbf{a}}^{\mathbf{b}} f) = \int_{a}^{b} g\,,\;
\int_{a}^{b} g = \int_{a}^{b} g \,\bigr)
\]
trivially belongs to the permeability relation $\rho$, so the
classical integral is available in the target chunk $\Sigma_T$.

\vspace{-0.5cm}
\section{Computability in \texorpdfstring{$\Rzl$}{Rzl}}
\label{sec:computability}

\vspace{-0.4cm}
A real number is computable if its binary expansion can be output by a
Turing machine. We extend this to $\Rzl$. An element $\mathbf{z}$ is
\emph{computable} if there exists a computable map $f:\Z\to\R_c$ (the
computable reals) such that $\mathbf{z}_n = f(n)$ for all $n$, and the
set $\{n<0 : f(n)\neq 0\}$ is finite with a known bound. Let $\Rzlc$
denote the set of computable elements.

Because the algebraic operations are defined componentwise, addition,
subtraction, and multiplication are computable in the sense of
type-2 effectivity~\cite{weihrauch2000computable}. A Turing machine
can output arbitrarily good approximations of each coefficient in
finite time, and the finite negative tail of a product can be
determined from the index bounds of the operands. In fact, $\Rzlc$
forms a computable metric space under the valuation metric $\dis$, and
the operations $+$, $-$, $\cdot$ are computable functions on that
space.

However, operations that depend on identifying the exact leading term
of an element fail to be computable.

\begin{lem}[Uncomputability of valuation]\label{lem:valuation-uncomputable}
There is no algorithm that, given an arbitrary computable element
$\mathbf{x}\in\Rzlc$, outputs its exact valuation $\val(\mathbf{x})$.
\end{lem}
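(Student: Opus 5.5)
We reduce to the classical undecidability of equality-with-zero for computable reals: given a computable real $r$, represented by a Turing machine producing rational approximations $q_k$ with $|r-q_k|\le 2^{-k}$, there is no algorithm deciding whether $r=0$. We take this as the known fact to be invoked. It follows from the halting problem: for a machine $M$, let $r_M=\sum_{k} 2^{-k}[M \text{ halts at step } k]$. Then $r_M$ is computable uniformly in $M$, and $r_M=0$ iff $M$ never halts.

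Suppose, for contradiction, that an algorithm $\mathcal{A}$ outputs $\val(\mathbf{x})$ for every $\mathbf{x}\in\Rzlc$. Given a computable real $r$, we form the element
\[
\mathbf{x}_r = r + \epsilon = \langle \widehat{r}, 1, 0, 0, \dots\rangle .
\]
This element is computable in the sense of the definition above, uniformly in the code for $r$. The coefficient map is $n\mapsto r$ at $n=0$, $n\mapsto 1$ at $n=1$, and $n\mapsto 0$ otherwise, all computable reals. It has no nonzero negative coefficients, so we may supply the bound $0$ for the negative tail. The point is that the definition of $\Rzlc$ only requires \emph{some} known bound on the negative support, not knowledge of which coefficients actually vanish.

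By construction, $\val(\mathbf{x}_r)=0$ if $r\neq 0$ and $\val(\mathbf{x}_r)=1$ if $r=0$. Running $\mathcal{A}$ on $\mathbf{x}_r$ therefore decides whether $r=0$, which contradicts the fact recalled in the first paragraph. The summand $\epsilon$ is there only so that $\mathbf{x}_r\neq 0$ and its valuation is finite in both cases. Alternatively, one could use $\mathbf{x}_r=r$ and distinguish valuation $0$ from $\infty$.

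The main obstacle is a matter of presentation rather than mathematics: the argument must make precise that $\mathcal{A}$ receives $\mathbf{x}$ only through its name, namely a program for $f$ together with the tail bound. The reduction must map the code of $r$ to such a name computably; we will spell this out briefly. If needed, we will also include the short proof that $r_M$ is computable, to keep the argument self-contained.
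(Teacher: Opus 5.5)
Your proposal is correct and follows the paper's proof: the paper also forms $\mathbf{x} = a+\epsilon$, whose valuation is $1$ when $a=0$ and $0$ otherwise, so computing $\val$ would decide the zero-test for computable reals. Your additions, the halting-problem justification of that undecidability and the check that the name of $\mathbf{x}_r$ (coefficient program plus tail bound) is computed uniformly from the code of $r$, make explicit what the paper leaves implicit.
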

\vspace{-0.5cm}
\begin{proof}
Let $a \in \R_c$ be an arbitrary computable real number and construct
the element $\mathbf{x} = a + \epsilon$. If $a = 0$, then
$\mathbf{x} = \epsilon$, which has valuation $1$. If $a \neq 0$, then
$\mathbf{x}$ has a non-zero standard part, so its valuation is $0$.
Computing $\val(\mathbf{x})$ therefore reduces to testing whether a
computable real is exactly zero, which is a classically undecidable
problem in computable analysis.
\end{proof}

\begin{thm}[Non-computability of division]\label{thm:division}
No algorithm can output the multiplicative inverse of an arbitrary
computable $\mathbf{x}\in\Rzlc$.
\end{thm}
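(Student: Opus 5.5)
The plan is to reduce division to the zero-test problem for computable reals, exactly as in \Cref{lem:valuation-uncomputable}, but arranged so that a hypothetical inverse-computing algorithm must commit to a finite negative support bound. Suppose an algorithm $\mathcal{A}$ existed which, given (a name of) any nonzero $\mathbf{x}\in\Rzlc$, outputs a name of $\mathbf{x}^{-1}$ in the sense of the definition of computable elements. By that definition, such a name consists of a computable coefficient map together with a \emph{known} bound $N$ with $\mathbf{x}^{-1}_n=0$ for $n<-N$. The key observation is that $N$ is a discrete piece of output, so it is determined after reading only a finite portion of the input name.

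For the reduction, given a computable real $a$, I consider the family $\mathbf{x}_a = a+\epsilon$, which is nonzero for every $a$, so $\mathcal{A}$ must accept it. By \Cref{lem:inverse}, if $a\neq 0$ then $\val(\mathbf{x}_a)=0$ and $\mathbf{x}_a^{-1}=a^{-1}(1+\epsilon/a)^{-1}=\sum_{n\ge 0}(-1)^n a^{-n-1}\epsilon^n$, with no negative part; if $a=0$ then $\mathbf{x}_a^{-1}=\omega$, with nonzero coefficient at index $-1$. So $\mathcal{A}$ would let us decide $a=0$ by computing $(\mathbf{x}_a^{-1})_{-1}$ and noting it equals $1$ if $a=0$ and $0$ otherwise. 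This is not enough alone, since a computable real is only given by approximations and testing whether it equals $0$ or $1$ is undecidable. So the argument must be sharpened.

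The continuity argument will do this. I run $\mathcal{A}$ on $\mathbf{x}_0=\epsilon$ (with $a=0$ given by the constant zero approximation sequence). It halts with a negative-support bound $N\ge 1$ and, moreover, within finite time produces a rational approximation of $(\mathbf{x}_0^{-1})_{-1}=1$ to within $1/4$, having read only approximations of $a$ up to some precision $2^{-m}$. Now I feed instead $a'=2^{-m-1}$, whose name agrees with that of $0$ on the first $m$ queries. Then $\mathcal{A}$ must produce the same approximation near $1$ for $(\mathbf{x}_{a'}^{-1})_{-1}$, but the true value is $0$. This contradiction gives the theorem. Equivalently, the map $\mathbf{x}\mapsto\mathbf{x}^{-1}$ is discontinuous with respect to the input representation at $\epsilon$, and computable maps are continuous.

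The main obstacle is making precise which representation of $\Rzlc$ is used, since the paper's definition (a computable coefficient map plus a bound) admits several readings. I would fix the type-2 representation in which coefficients are given by Cauchy names and the finite bound is given explicitly. Then I would check that $a\mapsto a+\epsilon$ is computable on names, so that the names of $\mathbf{x}_0$ and $\mathbf{x}_{a'}$ share a long common prefix. With that fixed, the finite-use principle of Type-2 machines applies directly.
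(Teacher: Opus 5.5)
Your argument is correct, and its skeleton matches the paper's: feed the hypothetical inverter $a+\epsilon$ and read off the coefficient of $\epsilon^{-1}$ in the output, which is $1$ if $a=0$ and $0$ otherwise. The two proofs differ only in the final step.
\begin{itemize}
\item The paper approximates $y_{-1}$ to within $1/3$. This decides whether $a=0$, contradicting the cited undecidability of the zero test for computable reals.
\item You unfold that black box with the finite-use principle for Type-2 machines. The run on $\epsilon$ commits to an approximation near $1$ after reading $a$ only to precision $2^{-m}$. The real $a'=2^{-m-1}$ has a name sharing that prefix, so the machine is forced into a wrong output.
\end{itemize}
Your version is self-contained and proves slightly more: inversion is discontinuous at $\epsilon$ with respect to the name representation, so it is not computable even relative to an oracle. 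The paper's version is shorter and works in any setting where the zero test is known to be undecidable, for instance reals given by program indices.

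However, your stated reason for needing the sharpening is wrong. The value $(\mathbf{x}_a^{-1})_{-1}$ is promised to lie in $\{0,1\}$, so a single approximation within $1/3$ does determine which value it takes. Your own continuity step uses exactly this, with $1/4$. What is undecidable is the zero test for an unrestricted computable real, and the simple reduction already produces such a test. Your continuity argument is just the standard proof of that fact, inlined.

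Finally, your remark about the negative-support bound $N$ plays no role in the proof. $N$ is only an upper bound, and $N\ge 1$ is also a legitimate output when $a\neq 0$, so it cannot separate the two cases on its own.
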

\vspace{-0.5cm}
\begin{proof}
Suppose, for contradiction, that division were computable.  Given any computable real $a$, construct $\mathbf{x} = a + \epsilon$, which is clearly computable. Let $\mathbf{y} = \mathbf{x}^{-1}$ be its
assumed computable inverse.  Denote by $y_{-1}$ the coefficient of $\epsilon^{-1}$ (i.e. at index $-1$) in $\mathbf{y}$.

If $a = 0$, then $\mathbf{x} = \epsilon$, so $\mathbf{y} = \omega$ and $y_{-1} = 1$.  If $a \neq 0$, then $\mathbf{x}$ has valuation $0$, hence $\mathbf{y}$ has valuation $0$, which forces $y_{-1} = 0$. Thus $y_{-1}$ is a computable real number that equals $1$ exactly when $a = 0$, and equals $0$ otherwise.

Now, since $y_{-1}$ is computable, we can effectively approximate it to within $1/3$.  If the approximation is closer to $0$ than to $1$, then $a \neq 0$; if it is closer to $1$, then $a = 0$.  This yields a
decision procedure for the zero-test of an arbitrary computable real, contradicting a fundamental result of computable analysis. Hence division cannot be computable.
\end{proof}

\begin{cor}
The order relation $<$ on $\Rzlc$ is undecidable.
\end{cor}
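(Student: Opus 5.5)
The plan is a reduction to the zero-test for computable reals, in the same way \Cref{lem:valuation-uncomputable} and \Cref{thm:division} were proved. Suppose, for contradiction, that some algorithm decides, for computable $\mathbf{x},\mathbf{y}\in\Rzlc$, whether $\mathbf{x}<\mathbf{y}$. It suffices to decide the single relation $0<\mathbf{z}$ for computable $\mathbf{z}$, since $\mathbf{x}<\mathbf{y}$ iff $0<\mathbf{y}-\mathbf{x}$ and subtraction is computable. So the first step is to choose a family of computable elements whose sign encodes whether a given computable real $a$ is zero.

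For an arbitrary computable real $a$, I would form the two computable elements $\mathbf{z}_1 = a+\epsilon$ and $\mathbf{z}_2 = -a+\epsilon$. Their coefficient sequences are computable, and their negative tails are empty, with known bound $0$. The core of the argument is the case analysis using the lexicographic order. If $a=0$, both elements equal $\epsilon$, so both are positive. If $a>0$, then $\val(\mathbf{z}_2)=0$ with leading coefficient $-a<0$, so $\mathbf{z}_2<0$. If $a<0$, then symmetrically $\mathbf{z}_1<0$. Hence $a=0$ iff $0<\mathbf{z}_1$ and $0<\mathbf{z}_2$.

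Two calls to the assumed decision procedure would therefore decide whether $a=0$. That contradicts the undecidability of the zero-test for computable reals, the same fact already invoked in \Cref{lem:valuation-uncomputable}. Alternatively, one can use the single element $\mathbf{z}=a^2\cdot\mathbf{1}-\epsilon$, since $a^2$ is computable: $\mathbf{z}<0$ iff $a=0$. That gives the reduction with one query.

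The only delicate point is making sure the constructed inputs really lie in $\Rzlc$ as defined, with a computable coefficient map and a known bound on the negative support. I would also state clearly that deciding $<$ means a total algorithm on all pairs of computable elements, not just a semi-decision. The sign computation itself is routine from the definition of the order.
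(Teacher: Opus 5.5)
Your proof is correct and takes the same route as the paper: you reduce an undecidable predicate on computable reals to order queries on computable elements built from $a$ and $\epsilon$, just as in \Cref{lem:valuation-uncomputable}. Your version is also slightly more accurate than the paper's one-line argument. Its test $a+\epsilon<|a|+\epsilon$ is equivalent to $a<0$, so it decides negativity (also undecidable) rather than $a=0$, whereas your queries $0<\pm a+\epsilon$, or the single query $a^2-\epsilon<0$, really do characterise $a=0$.
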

\vspace{-0.5cm}
\begin{proof}
If $<$ were computable, testing $a+\epsilon < |a|+\epsilon$ would
decide $a=0$, which is impossible.
\end{proof}

\noindent\textbf{Constructive character of the model.}
The entire construction of $\Rzl$ uses only real sequences and elementary algebra, avoiding the Axiom of Choice and any non-constructive ultrafilters.  As a result, the model sits naturally
within a constructive framework. In fact, the proof that $\Rzl$ is a totally ordered field with a standard part map can be formalised in a weak subsystem of second-order arithmetic (such as $\mathsf{RCA}_0$),
making it a tractable object for constructive reverse
mathematics~\cite{sanders2013connection,sanders2015effective}.  The limits of computability demonstrated above (uncomputability of valuation, division, and order) reflect genuine algorithmic boundaries
rather than artefacts of the model, underscoring the fine-grained constructive content of infinitesimal analysis when freed from non-constructive principles.

\vspace{-0.5cm}
\section{Discussion and Further Work}
\label{sec:discussion}
\vspace{-0.5cm}
We have constructed a non-Archimedean ordered field $\Rzl$ containing real numbers, infinitesimals, and infinities using only sequences of real numbers and a convolution product.  
The logical foundation via Chunk \& Permeate resolves the inconsistencies that arise when merging classical and infinitesimal axioms, avoiding the heavy model-theoretic machinery of traditional nonstandard analysis.  
Because the C\&P strategy is explicitly formulated in terms of paraconsistent logic, the entire construction can be viewed as a general model-building technique for inconsistent theories, of independent interest to logicians.

The two topologies, valuation and standard, offer complementary structural views. The $(k,n)$-continuity hierarchy, expressed purely via the valuation, provides a fine-grained measure of regularity that generalizes Lipschitz conditions and captures higher-order differentiability. Furthermore, the Riemann integral and the Fundamental Theorem of Calculus demonstrate that foundational analysis can be carried out natively within the model and then permeated to classical results without relying on a full transfer principle.

\vspace{-0.5cm}
\paragraph{Proof-Theoretic Strength and Reverse Mathematics}
A natural question is where the construction of $\Rzl$ and its field properties sit within the hierarchy of reverse mathematics.
Because the underlying set $\Rzl = \mathbb{R}_{\mathrm{fin}}^{\mathbb{Z}}$ consists of ordinary real sequences (coded as functions $\mathbb{N}\to\mathbb{R}$ in second-order arithmetic), the entire development can be formalized in the weak base theory $\mathsf{RCA}_0$ (Recursive Comprehension Axiom):
\begin{itemize}[nosep]
  \item The set of sequences with finite negative support is $\Sigma^0_1$, and $\mathsf{RCA}_0$ proves its closure under componentwise addition and convolution.
  \item The existence of inverses (Lemma~\ref{lem:inverse}) requires only induction on the valuation; since the valuation of a non-zero element is uniquely determined, the recurrence is provably total in $\mathsf{RCA}_0$.
  \item The lexicographic order is defined by a $\Sigma^0_1$ condition; $\mathsf{RCA}_0$ proves it is a total order on $\Rzl$.
  \item The standard part map $\St$ is a continuous functional on the represented space of finite elements, with basic homomorphism properties provable in $\mathsf{RCA}_0$.
\end{itemize}
Thus, the existence of a non-Archimedean ordered field with a standard part homomorphism is provable directly in $\mathsf{RCA}_0$, a system strictly weaker than $\mathsf{WKL}_0$ and far below the Axiom of Choice. This establishes $\Rzl$ as a highly constructive framework amenable to constructive reverse mathematics \cite{sanders2013connection}.

\vspace{-0.5cm}
\paragraph{Infinitesimal Smoothness and Synthetic Differential Geometry}
The $(k,n)$-continuity hierarchy of \Cref{sec:continuity} offers a concrete realization of concepts from synthetic differential geometry (SDG). In SDG, the line $R$ contains nilsquare infinitesimals $D = \{d \in R \mid d^2 = 0\}$, and functions $f:R\to R$ are called \emph{microlinear} if $f(d_1+d_2)-f(d_1)-f(d_2)+f(0)=0$ for all $d_1,d_2\in D$. Our hierarchy refines this setting, e.g. a $(2,1)$-continuous function satisfies $\val(f(\mathbf{x}+d\epsilon) - f(\mathbf{x})) \ge 2$ whenever $\val(d)\ge 1$, meaning first-order infinitesimal inputs produce second-order infinitesimal changes, precisely the nilsquare property. Thus, $\Rzl$ captures differentiability orders in a fully classical and explicitly constructible setting, providing a computable substitute for SDG's intuitionistic smooth toposes.

Several promising directions for future research remain: differential equations via formal series, explicit calibrations in Weihrauch degrees, physical applications, and a formal comparison between C\&P permeability and nonstandard transfer principles.

\vspace{-0.5cm}
\section*{Acknowledgements}
% Most of this research is based on the author's PhD thesis at the University of Canterbury, funded by the Indonesia Endowment Fund for Education (LPDP). The author thanks Dr. Maarten McKubre-Jordens and Dr. Hannes Diener for their guidance, and the examiners Professor Elemér Rosinger and Dr. Josef Berger for valuable discussions.
\vspace{-0.5cm}
Most of this research is based on the author's PhD thesis at the University of Canterbury (NZ), during which the author received financial support from Lembaga Pengelola Dana Pendidikan (LPDP) under the Ministry of
Finance of the Republic of Indonesia. The author is deeply grateful to Drs.~Maarten McKubre-Jordens and Hannes Diener for their valuable guidance and supervision during the development of this work. Special thanks are also extended to the thesis and defense examiners, Professor Elemér Rosinger (University of Pretoria) and Dr. Josef Berger (LMU Munich), for their insightful evaluations and valuable discussions.

\bibliographystyle{IEEEtran}
\bibliography{references}

\end{document}